\newtheorem{theorem}{\bf Theorem}[section]
\newtheorem{lemma}[theorem]{\bf Lemma}
\newtheorem{cor}[theorem]{\bf Corollary}
\newtheorem{proposition}[theorem]{\bf Proposition}
\newtheorem{prop}[theorem]{\bf Proposition}
\newtheorem{remark}[theorem]{\bf Remark}
\newtheorem{notation}[theorem]{\bf Notation}
\newtheorem{result}[theorem]{\bf Result}
\theoremstyle{definition} 
\newtheorem{defi}[theorem]{\bf Definition}
\newcommand{\cP}{\mathcal{P}}
\newcommand{\C}{\mathcal{C}}
\newcommand{\cC}{\mathcal{C}}
\newcommand{\cL}{\mathcal{L}}
\newcommand{\B}{\mathcal{B}}
\newcommand{\cB}{\mathcal{B}}
\newcommand{\PP}{\mathbb{P}}
\newcommand{\cS}{\mathcal{S}}
\newcommand{\F}{\mathbb{F}}
\newcommand{\E}{\mathbb{E}}
\newcommand{\PG}{\mathrm{PG}}
\newcommand{\AG}{\mathrm{AG}}
\newcommand{\GF}{\mathrm{GF}}
\newcommand{\cut}[1]{}
\newcommand{\angbra}[1]{\left\langle #1 \right\rangle}
\newcommand{\qbinom}[2]{\begin{bmatrix}#1\\ #2\end{bmatrix}_q}
\title{Short minimal codes and covering codes via strong blocking sets in projective spaces}
\author{{\bf Tam\'as H\'eger\thanks{ELKH--ELTE Geometric and Algebraic Combinatorics Research Group,
			E\"otv\"os Lor\'and University, Budapest, Hungary. The author is supported by the Hungarian Research Grant (NKFI) No.  124950. 	E-mail: {\tt heger.tamas@ttk.elte.hu}}, } 
	{\bf Zolt\'an L\'or\'ant Nagy\thanks{ELKH--ELTE Geometric and Algebraic Combinatorics Research Group,
			E\"otv\"os Lor\'and University, Budapest, Hungary. The author is supported by the Hungarian Research Grant (NKFI) No. K 120154, 124950, 134953. 	E-mail: {\tt nagyzoli@caesar.elte.hu}}}\\ 
}
\date{}
\begin{document}
	
	\maketitle
	
	\begin{abstract} 
		Minimal linear codes   are in one-to-one correspondence with special types of blocking sets of projective spaces over a finite field, which are called strong or cutting blocking sets. In this paper we prove an upper bound on the minimal length of minimal codes of dimension $k$ over the $q$-element Galois field which is linear in both $q$ and $k$, hence improve the previous superlinear bounds. This result determines the minimal length up to a small constant factor.
		We also improve the lower and upper bounds on the size of so called higgledy-piggledy line sets in projective spaces and apply these results to present improved bounds on the size of covering codes and saturating sets in projective spaces as well. The contributions rely on geometric and probabilistic arguments.
		
		\bigskip\noindent \textbf{Keywords:}  minimal code, covering code, saturating set, strong blocking set, cutting blocking set, higgledy-piggledy line set,  random construction, projective space
	\end{abstract}

	\section{Introduction}

	Throughout this paper, $q$ denotes a prime power and $\F_{q}$ denotes the Galois field with $q$ elements, while $p$ stands for the characteristics of $\F_q$. 
	Let $\F_{q}^{n}$ be the $n$ -dimensional vector space over $\F_{q}$.
	Denote by $[n, r]_{q}$ a $q$-ary linear code of length $n$ and
	dimension $r$, which is the set of codewords (code vectors) of a subspace of $\F_{q}^{n}$ of dimension $r$. For a general introduction on codes we refer to \cite{huffman2010fundamentals}.
	
	\begin{defi}\label{def:mincode}
		In a linear code, a codeword is  
		{\em minimal} if its support does not contain the support of any codeword other than its scalar multiples. A code is {\em minimal} if  its codewords are all minimal.
	\end{defi}
	Minimal codewords in linear codes were originally
	studied in connection with decoding algorithms~\cite{MR551274} and have been used by Massey~\cite{Massey} to determine the access structure in his code-based secret sharing scheme. For a general overview on recent results in connection with minimal codes we refer to \cite{alfa+neri, lu2019parameters}. The general problem is to determine the minimal length of a  $[n,k]_q$ minimal code can have, provided that $k$ and $q$ are fixed.
	
	\begin{defi}[Minimal length of a minimal code]\label{min_length}
		Denote by $m(k,q)$ the minimal length of a  $[n,k]_q$ minimal code with parameters $k$ and $q$.
	\end{defi}
	
	The following bounds are due to Alfarano, Borello, Neri and Ravagnani \cite{alfa+neri}. We do not state the quadratic upper bound precisely, as it depends on some properties of $q$ and $k$.
	
	\begin{theorem}[\cite{alfa+neri}]\label{thm:lower_bound_length}
		Let $\C$ be an $[n,k]_q$ minimal code. We have 
		\begin{equation}\label{eq_1}
			(k-1)(q+1)\leq  m(k,q) \leq ck^2q,
		\end{equation}
		for some $c\geq 2/9$.
	\end{theorem}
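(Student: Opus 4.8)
The plan is to translate everything into projective geometry and then attack the two inequalities by completely different methods. First I would record the dictionary between minimal codes and blocking sets. Fixing a generator matrix of $\C$, its $n$ columns define a (multi)set $S$ of $n$ points spanning $\PG(k-1,q)$, and each nonzero message vector $v$ yields a codeword whose support is the set of columns lying off the hyperplane $H_v=\{x:v\cdot x=0\}$. Comparing the supports of two codewords then translates into the inclusion $S\cap H_v\subseteq H_w$, so the codeword of $v$ is minimal exactly when no hyperplane other than $H_v$ contains $S\cap H_v$, i.e. when $\angbra{S\cap H_v}=H_v$. Consequently $\C$ is minimal if and only if $S$ meets every hyperplane in a spanning set — a strong (cutting) blocking set — so that $m(k,q)$ is precisely the minimum size of such a set in $\PG(k-1,q)$, and both inequalities become extremal problems about strong blocking sets.

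For the lower bound the key geometric observation is this: if $S$ is a strong blocking set and $\Pi$ is any codimension-$2$ subspace \emph{disjoint} from $S$, then the $q+1$ hyperplanes through $\Pi$ partition the points off $\Pi$, hence partition all of $S$; and since each such hyperplane $H$ satisfies $\angbra{S\cap H}=H\supsetneq\Pi$, each of the $q+1$ parts $S\cap H$ contains at least $k-1$ points, being a spanning subset of a $(k-2)$-dimensional projective space. Summing over the pencil gives $|S|\ge (k-1)(q+1)$ at once. The real work is therefore to guarantee the existence of a disjoint $\Pi$. Assuming (as we may, arguing by contradiction) that $|S|<(k-1)(q+1)$, a double count bounds the number of codimension-$2$ subspaces meeting $S$ by $|S|$ times the number through a fixed point; comparing with the total number of codimension-$2$ subspaces, whose ratio is of order $q^2$, forces a disjoint $\Pi$ to exist in the principal range $k=O(q)$. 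The complementary range I would reduce to this one by induction on $k$, projecting from a point of $S$ — projection is readily seen to preserve the cutting property — and accounting for the collapse in the fibres.

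For the upper bound I would exhibit an explicit cutting blocking set of size $O(k^2q)$. The natural candidate is the union $B=\bigcup_{1\le i<j\le k}\angbra{e_i,e_j}$ of the $\binom{k}{2}$ edges of the fundamental simplex. To check that $B$ is cutting, take a hyperplane $H:\sum a_i x_i=0$, let $\{1,\dots,r\}$ be the support of $(a_i)$, and note that the intersection points $a_je_i-a_ie_j\in\angbra{e_i,e_j}\cap H$ for $1<j\le r$, together with the basis points $e_i\in H$ for $i>r$, supply $k-1$ linearly independent points of $B\cap H$, whence $\angbra{B\cap H}=H$. Since distinct edges meet only at vertices, $|B|=k+\binom{k}{2}(q-1)\sim\tfrac12 k^2q$, giving the claimed order; a more economical configuration attains the stated constant.

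A word on difficulty. The upper bound is a finite verification once the right configuration is written down, so the genuine obstacle is the lower bound — specifically, showing that a strong blocking set of near-minimal size cannot block every codimension-$2$ subspace (equivalently, that the disjoint $\Pi$ above exists) across the full range of $k$ and $q$. This is exactly where the crude counting degrades, and where a careful induction transferring the cutting property to a quotient, with honest bookkeeping of the lost dimension, is required.
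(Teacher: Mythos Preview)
Your translation into cutting blocking sets and the tetrahedron construction for the upper bound are both correct and coincide with what the paper records.

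The lower bound is where your route diverges from the paper's and where there is a genuine gap. The paper (Theorem~3.9) never seeks a codimension-$2$ flat disjoint from $S$. Instead it picks a hyperplane $H$ maximising $|S\cap H|$, notes via Proposition~\ref{strblsetequiv1} that $S\setminus H$ is an affine blocking set in $\AG(k-1,q)$, and invokes Jamison's theorem (Theorem~\ref{Jamison}) to get $|S\setminus H|\ge (k-1)(q-1)+1$. From a minimal affine blocking set inside $S\setminus H$ one extracts a second hyperplane $U$ meeting it in a single point; a pigeonhole over the $q-1$ hyperplanes through $H\cap U$ other than $H$ and $U$ then forces $|S\setminus U|\ge (k-1)q$, and the cutting property supplies $|S\cap U|\ge k-1$. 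The Jamison input makes the argument uniform in $k$ and $q$ with no case distinction.

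Your strategy, by contrast, rests on producing a disjoint codimension-$2$ flat $\Pi$, and this is not delivered outside the range $k=O(q)$. The double count yields a disjoint $\Pi$ only when $|S|<\qbinom{k}{2}\big/\qbinom{k-1}{2}=(q^{k}-1)/(q^{k-2}-1)\approx q^2$, which for large $k$ sits far below $(k-1)(q+1)$. The projection from a point $P\in S$ does preserve the cutting property, but the size bookkeeping does not close: the inductive bound in $\PG(k-2,q)$ gives only $|S|\ge 1+(k-2)(q+1)$, short by exactly $q$, and nothing in your sketch forces the total fibre collapse to be at least $q$. Nor does the alternative---lifting a disjoint codimension-$2$ flat from the quotient---succeed, since its preimage is a codimension-$2$ flat \emph{through} $P\in S$. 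So as written the lower-bound argument is incomplete; the affine-blocking-set approach in the paper is precisely what bypasses this obstruction.
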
 
	
	In \cite{chabanne2013towards} it has been shown by Chabanne, Cohen and Patey that the upper bound on $m(k,q)$ can be refined as follows.

	\begin{theorem}[\cite{chabanne2013towards}]\label{Cohen}
		\begin{equation}\label{eq:upper_bound_nonconstructive} m(k,q) \leq \frac{2k}{\log_q\left(\frac{q^2}{q^2-q+1}\right)}.
		\end{equation}
	\end{theorem}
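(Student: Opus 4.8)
The plan is to use a first-moment (probabilistic) argument on a random linear code, exploiting the standard reformulation of minimality in terms of support containment. I would represent an $[n,k]_q$ code $\C$ by a $k\times n$ generator matrix $G$ whose columns $P_1,\dots,P_n$ are chosen independently and uniformly at random from $\F_q^k$. A codeword is then $c_a=(\langle a,P_1\rangle,\dots,\langle a,P_n\rangle)$ for a functional $a\in\F_q^k$, and by Definition~\ref{def:mincode} the code fails to be minimal precisely when there exist linearly independent $a,b\in\F_q^k$ with $\mathrm{supp}(c_b)\subseteq\mathrm{supp}(c_a)$ (proportional pairs have equal support and never violate minimality). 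So the goal is to bound the probability of this event and show it drops below $1$ as soon as $n$ is slightly larger than the claimed quantity.

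The key computation is the per-coordinate analysis. For fixed linearly independent $a,b$, the map $P_i\mapsto(\langle a,P_i\rangle,\langle b,P_i\rangle)$ pushes the uniform distribution on $\F_q^k$ forward to the uniform distribution on $\F_q^2$, since each fibre has size $q^{k-2}$. The inclusion $\mathrm{supp}(c_b)\subseteq\mathrm{supp}(c_a)$ is violated at coordinate $i$ exactly when $\langle b,P_i\rangle\neq 0$ while $\langle a,P_i\rangle=0$, an event of probability $(q-1)/q^2$. Because the columns are independent, this gives
\[
\PP\big(\mathrm{supp}(c_b)\subseteq\mathrm{supp}(c_a)\big)=\left(1-\frac{q-1}{q^2}\right)^{n}=\left(\frac{q^2-q+1}{q^2}\right)^{n},
\]
which is exactly where the factor $q^2/(q^2-q+1)$ enters.

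Next I would take a union bound over the $(q^k-1)(q^k-q)$ ordered pairs of linearly independent functionals $(a,b)$, obtaining
\[
\PP(\C\text{ is not minimal})\le (q^k-1)(q^k-q)\left(\frac{q^2-q+1}{q^2}\right)^{n}<q^{2k}\left(\frac{q^2-q+1}{q^2}\right)^{n}.
\]
Requiring the right-hand side to be at most $1$ and taking logarithms base $q$ yields $n\ge \tfrac{2k}{\log_q\!\left(q^2/(q^2-q+1)\right)}$; for such $n$ a minimal code of length $n$ exists, which is precisely the bound~\eqref{eq:upper_bound_nonconstructive} on $m(k,q)$.

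The main obstacle is conceptual rather than computational: pinning down the equivalence between failure of minimality and support containment exactly, in particular restricting to linearly independent pairs so that scalar multiples are correctly excluded, and tracking the strict versus non-strict inequality so the rounding lands on the clean statement of the theorem (using $(q^k-1)(q^k-q)<q^{2k}$ gives the small slack needed). A secondary point to check is that the across-coordinate independence genuinely follows from choosing the columns of $G$ independently; once that is in place, everything reduces to the elementary single-coordinate probability above.
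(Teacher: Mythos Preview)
Your argument is correct. The paper itself does not prove Theorem~\ref{Cohen}; it is quoted from \cite{chabanne2013towards} as a known non-constructive bound, so there is no in-paper proof to compare against. Your first-moment computation is exactly the standard one: the per-coordinate probability $(q^2-q+1)/q^2$ and the union bound over the $(q^k-1)(q^k-q)<q^{2k}$ ordered linearly independent pairs give the claimed threshold. One point worth stating explicitly (you essentially use it but do not say it): the bad event ``there exist linearly independent $a,b$ with $\sigma(c_b)\subseteq\sigma(c_a)$'' automatically contains the event that $G$ has rank less than $k$, so its complement guarantees simultaneously that the code has dimension $k$ \emph{and} is minimal. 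Apart from this and the usual harmless ceiling in passing from the real threshold to an integer length, nothing is missing.

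It may also be worth noting that your argument is the coding-theoretic counterpart of what the paper does geometrically for $q=2$ in Section~\ref{sec:randomq2}: choosing random columns in $\F_q^k$ corresponds to choosing random points in $\PG(k-1,q)$, and the event $\sigma(c_b)\subseteq\sigma(c_a)$ is precisely the event that the point set misses some $T\in\mathcal{H}$ in Proposition~\ref{strblsetequiv2}. The probabilities match, which is why Theorem~\ref{thm:binary} improves Theorem~\ref{Cohen} only by an additive constant.
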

	
	Note that this bound is a non-constructive one and that $\lim_{q \rightarrow \infty} q\ln{q} \cdot \log_q\left(\frac{q^2}{q^2-q+1}\right) =1$, hence for large $q$, it roughly says $m(k,q)\lesssim 2kq\ln(q)$. For $q=2$ it yields $m(k,2)\leq2k/(\log_2(4/3))$.

	Our contribution is a linear upper bound in both $k$ and $q$. The two cases follow from Theorems \ref{main1} and \ref{thm:binary}.
	
	\begin{theorem}\label{probab}
		If $q>2$, then
		\begin{equation}\label{eq_2kq} m(k,q) \leq \left\lceil \frac{2}{1+\frac{1}{(q+1)^2\ln q}}(k-1)\right\rceil(q+1).
		\end{equation}
		If $q=2$, then
		\begin{equation} m(k,2)\leq \frac{2k-1}{\log_2(\frac{4}{3})}.\end{equation}
	\end{theorem}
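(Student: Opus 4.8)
The plan is to dualize and build the code from a small \emph{strong (cutting) blocking set}. Recall the standard correspondence (the one advertised in the abstract): a nondegenerate $[n,k]_q$ code is minimal exactly when the $n$ points cut out by the columns of a generator matrix form a strong blocking set of $\PG(k-1,q)$, i.e.\ a point set meeting every hyperplane in a \emph{spanning} subset. Hence $m(k,q)$ equals the minimum size of a strong blocking set in $\PG(k-1,q)$, and it suffices to exhibit a small one. For $q>2$ I would take it to be a union of lines $S=\ell_1\cup\dots\cup\ell_t$, so that $|S|\le t(q+1)$, with the goal of making $t$ roughly $2(k-1)$; the factor $q+1$ then matches the lower bound of Theorem~\ref{thm:lower_bound_length} up to the constant $2$.

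The first step is a clean reduction criterion: $S=\bigcup_i\ell_i$ is a strong blocking set provided that no codimension-$2$ subspace of $\PG(k-1,q)$ meets all of the $\ell_i$ (this is exactly the higgledy-piggledy condition on the line set). I would prove sufficiency via projection. If some hyperplane $H$ fails to be spanned by $S\cap H$, then $S\cap H$ lies inside a hyperplane $\Pi$ of $H$, which is a codimension-$2$ subspace of the whole space, and consequently \emph{every} line meets $\Pi$. Conversely, fixing a codimension-$2$ center $\Pi$ and projecting from it onto the quotient line $\PG(k-1,q)/\Pi\cong\PG(1,q)$, a line skew to $\Pi$ maps bijectively onto all $q+1$ points of the quotient; so a single $\ell_i$ avoiding $\Pi$ already forces $S$ to meet every hyperplane through $\Pi$ outside $\Pi$. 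Avoiding a common codimension-$2$ transversal is therefore precisely what is needed.

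The second step is probabilistic: choose the $t$ lines independently and uniformly at random and union-bound over the codimension-$2$ subspaces. For a fixed such $\Pi$, let $p=\Pr[\ell\cap\Pi\neq\emptyset]$ for a uniform random line $\ell$; a direct incidence count gives $p$ of order $1/q$, and in fact the convenient identity $(1-p)\,\qbinom{N+1}{2}=q^{2N-2}$ holds with $N=k-1$, where $\qbinom{N+1}{2}$ is the number of codimension-$2$ subspaces. The configuration fails only if some $\Pi$ meets all $t$ lines, an event of probability at most $\qbinom{N+1}{2}\,p^{\,t}$; taking the least $t$ with $\qbinom{N+1}{2}\,p^{\,t}<1$ produces a higgledy-piggledy set, hence a strong blocking set of size at most $t(q+1)$. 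Solving $t>\log_{1/p}\qbinom{N+1}{2}$ gives $t$ of order $2(k-1)$, and I expect the stated factor $\frac{2}{1+\frac{1}{(q+1)^2\ln q}}$ to fall out of a careful non-asymptotic evaluation of $p$ and of the Gaussian binomial coefficient together with the ceiling. For $q=2$ a union of $\approx 2(k-1)$ lines has about $6(k-1)$ points, which is worse than the claimed $(2k-1)/\log_2\frac43$, so there I would instead take $S$ to be a random \emph{point} set and union-bound the spanning failure directly over flags $(\Pi\subset H)$: a uniform point lies in $H\setminus\Pi$ with probability $\approx\frac14$, a given flag stays uncovered with probability $\approx\frac34$, which is where $\log_2\frac43$ enters, and balancing against the number of flags yields the bound.

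The main obstacle is the last mile of the $q>2$ estimate. The vanilla union bound with uniform lines already delivers a leading constant \emph{slightly above} $2$, so driving it \emph{below} $2$ — the whole content of the correction term $\frac{1}{(q+1)^2\ln q}$ — should require either a sharper-than-union-bound device (a second-moment or alteration step that deletes the few codimension-$2$ transversals surviving a mildly smaller $t$) or a semi-structured random model in which the probability that a random line meets a fixed codimension-$2$ subspace is provably at most $\frac1q e^{-1/(q+1)^2}$. Extracting the clean closed form from these estimates, verifying that a single outcome of the probability space is simultaneously good for all codimension-$2$ subspaces, and controlling the small-$k$ and small-$q$ boundary regimes where the asymptotics are not yet in force, is the delicate part I would expect to spend the most effort on.
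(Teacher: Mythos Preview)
Your $q=2$ plan matches the paper: a uniform random point set together with a union bound over the flags $\Pi\subset H$ (equivalently, over the sets $H\setminus\Pi$) is exactly what yields the $\log_2\frac43$ bound.

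For $q>2$, however, there is a genuine gap, and you have correctly located it. Your reduction criterion ``no codimension-$2$ subspace meets all the lines'' is only a \emph{sufficient} condition for the union to be a cutting blocking set (it is the Fancsali--Sziklai higgledy-piggledy criterion), not ``precisely what is needed''. With uniformly random lines and this criterion, the union bound $\qbinom{N+1}{2}p^{\,t}<1$ gives, after the honest estimate $p\approx q^{-1}(1+q^{-1})$, a value of $t$ of the form $\frac{2N}{1-\frac{1}{q\ln q}+o(1)}$, which is strictly \emph{above} $2N$. No second-moment trick, alteration, or change of random model will rescue this, because the criterion itself is too demanding: for a uniformly random line set of size $t<2N$, there genuinely are codimension-$2$ subspaces hitting all $t$ lines with positive probability.

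The paper's key idea, which you are missing, is to \emph{weaken} the criterion rather than sharpen the probability estimate. Even if a codimension-$2$ subspace $\Lambda$ meets every line, the configuration can still be a cutting blocking set: it fails only if, in addition, some hyperplane $H\supset\Lambda$ contains none of the lines except possibly those lying inside $\Lambda$. The paper therefore bounds separately (a) the probability that some $(N{-}3)$-space meets all lines, which is cheap and needs only $m\ge 1.8N$, and (b) the probability that some $(N{-}2)$-space $\Lambda$ meets all lines \emph{and} one of the $q{+}1$ hyperplanes through $\Lambda$ is missed in the above sense. Conditioning on a line meeting $\Lambda$, the chance that it lands in a fixed hyperplane through $\Lambda$ (or inside $\Lambda$) is essentially $\frac{q}{q+1}$, so (b) picks up an extra factor $\bigl(\tfrac{q}{q+1}\bigr)^{m}$ on top of the $q^{2N-m}$ coming from the codimension-$2$ count and intersection probability. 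It is precisely this extra factor that produces
\[
q^{2N-m}\Bigl(1-\tfrac{1}{(q+1)^2}\Bigr)^{m}<1
\quad\Longleftrightarrow\quad
m>\frac{2N}{1+\frac{1}{(q+1)^2\ln q}},
\]
pushing the constant \emph{below} $2$ and yielding the stated closed form. Your proposal lacks this refinement; adding it is the missing step.
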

	
	The case $q>2$ will follow immediately from Theorem \ref{main1}, which is based on a random construction of taking the point set of the union of less than $2k$ lines in a suitable projective space. From the proof it follows easily that with a positive probability (calculated therein), this provides a desired minimal code.

	Note that to apply probabilistic arguments for problems in finite geometry is not at all new. Here we only mention the paper of G\'acs and Sz\H{o}nyi \cite{Gacs} on various applications, the celebrated paper on complete arcs of Kim and Vu \cite{Kim-Vu} which applies  R\"odl's nibble, and the paper of second author \cite{NagyZ} on the  topic of saturating sets of projective planes which we revisit later on.
	
	As it was noticed  recently by  Alfarano, Borello and Neri \cite{alfarano2019geometric} and independently by Tang, Qiu, Liao, and Zhou \cite{tang2019full}, minimal codes are in one-to-one correspondence with special types of blocking sets of projective spaces, which they called {\em cutting blocking sets} after the earlier paper of Bonini and Borello \cite{bonini2020minimal}. In fact, this concept has been investigated in connection with saturating sets and covering codes a decade earlier by Davydov, Giulietti, Marcugini and Pambianco \cite{Davydov09} under the name \emph{strong blocking sets} and in the paper of Fancsali and Sziklai \cite{FancsaliSziklai1} in connection with so-called higgledy-piggledy line arrangements under the name {\em generator set}.
	
	Blocking sets and their generalisations are well-known concepts in finite geometry. For an introduction to finite geometries, blocking sets and various related topics we refer to \cite{Hirschfeld, KissSzonyi}. Let us give the corresponding definitions. We denote the finite projective geometry of dimension $N$ and order $q$ by $\PG(N,q)$.

	\begin{defi}[Blocking sets] Let $t, r, N$ be positive integers with $r<N$. A $t$-\em{fold} $r$-\em{blocking set} in~$\PG(N,q)$ is a set $\B\subseteq \PG(N,q)$ such that for every $(N-r)$-dimensional subspace $\Lambda$ of $\PG(N,q)$ we have $|\Lambda \cap \B|\geq t$. When $r=1$, we will  refer to $\B$ as a $t$-\em{fold blocking set}. When $t=1$, we will refer to it as an $r$-\em{blocking set}. When  $r=t=1$, $\B$ is simply a \em{blocking set}.
	\end{defi}

	A particular type of blocking sets was introduced independently in \cite{Davydov09} and \cite{bonini2020minimal} under different names. Let us present and use both terminologies.
	
	\begin{defi}[Multifold strong blocking sets, aka cutting blocking sets]\label{strong_def} 	 
		A {\em $t$-fold strong blocking set} of $\PG(N,q)$ is a point set that meets each $(t-1)$-dimensional subspace $\Lambda$ in a set of points which spans the whole subspace $\Lambda$ \cite{Davydov09}. A {\em cutting $t$-blocking set} of $\PG(N,q)$ is a point set that meets each $(N-t)$-dimensional subspace $\Lambda$ in a set of points which spans the whole subspace $\Lambda$ \cite{bonini2020minimal}. A \em{cutting blocking set} (without prefix) is a cutting $1$-blocking set.
	\end{defi}
	
	Clearly, cutting $t$-blocking sets and $(N-t+1)$-fold strong blocking sets coincide. It is worth noting that as we need at least $r+1$ points to span a subspace of dimension $r$, every $(r+1)$-fold strong blocking set (or, in other words, a cutting $(N-r)$-blocking set) is an $(r+1)$-fold $(N-r)$-blocking set.
	
	A cutting blocking set of $\PG(N,q)$ of size $n$ corresponds to a minimal $[n,N+1]_q$  code (see \cite{alfarano2019geometric, tang2019full}, and also Section \ref{sec:geom}). Thus, as short minimal codes are of interest, constructing small cutting blocking sets of $\PG(N,q)$ is highly relevant.
	
	Cutting blocking sets were also investigated by H\'eger, Patk\'os and Tak\'ats \cite{Heger} under the name hyperplane generating set. There it was proposed to construct such a set as the union of lines, and appropriate sets of lines were called line sets in {\em higgledy-piggledy arrangement}. 
	
	\begin{defi}
		A set of lines of $\PG(N,q)$ is in \emph{higgledy-piggledy arrangement}, if the union of their point sets is a cutting blocking set of $\PG(N,q)$. We may also refer to such line sets as higgledy-piggledy line sets for short.
	\end{defi}
	
	Such line sets of not necessarily finite projective spaces were studied in detail in \cite{FancsaliSziklai1} (see also \cite{FancsaliSziklai2} for a generalisation to higgledy-piggledy subspaces). Let us recall the three main results of Fancsali and Sziklai \cite{FancsaliSziklai1}.
	
	\begin{theorem}[Fancsali, Sziklai, \cite{FancsaliSziklai1}, Theorems 14, 24 and 26] \label{HPlines} Let $\mathbb{F}$ be an arbitrary field.
		\begin{enumerate}
			\item[i)] If $|\F|\geq N + \left\lfloor N/2\right\rfloor$, then every line set of $\PG(N,\F)$ in higgledy-piggledy arrangement contains at least $N + \left\lfloor N/2\right\rfloor$ lines.
			\item[ii)] If $|\F|\geq 2N - 1$, then there exist a line set of $\PG(N,\F)$ in higgledy-piggledy arrangement containing $2N-1$ lines.
			\item[iii)] If $\F$ is algebraically closed, then every line set of $\PG(N,\F)$ in higgledy-piggledy arrangement contains at least $2N - 1$ lines.
		\end{enumerate}
	\end{theorem}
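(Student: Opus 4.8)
The common engine for all three parts is a reformulation of the higgledy-piggledy (cutting) property in terms of \emph{transversals}: a line set $\mathcal{L}$ of $\PG(N,\F)$ is in higgledy-piggledy arrangement if and only if no $(N-2)$-dimensional subspace meets every line of $\mathcal{L}$, where the ``if'' direction is unconditional and the ``only if'' direction needs $|\F|\geq |\mathcal{L}|$. To prove the ``if'' direction I would argue by contraposition: if $\mathcal{L}$ is not cutting, some hyperplane $H$ contains an $(N-2)$-subspace $T$ with $\bigcup_{\ell\in\mathcal{L}}(\ell\cap H)\subseteq T$, and a short case check (treating $\ell\subseteq H$ separately from $\ell\cap H$ being a single point) shows that every line then meets $T$, so $T$ is a common transversal. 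For the ``only if'' direction, given a transversal $T$ I would run over the pencil of $|\F|+1$ hyperplanes through $T$: each line not contained in $T$ lies in exactly one of them, so once $|\F|+1>|\mathcal{L}|$ there is some $H\supseteq T$ containing no such line, which forces all intersection points into $T\subsetneq H$ and violates cutting. Thus the whole theorem becomes a statement about common codimension-two transversals.

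For part (ii) I would realise the $2N-1$ lines as the tangent lines of the rational normal curve $\gamma(t)=[1:t:\cdots:t^N]$ at $2N-1$ distinct parameters $t_1,\dots,t_{2N-1}$, which exist precisely because $|\F|\geq 2N-1$. Writing a codimension-two subspace as the common zero locus of two independent linear forms with associated univariate polynomials $F,G$ of degree $\leq N$, the tangent line at $t$ is met by this subspace exactly when the two binary linear restrictions $F(t)+sF'(t)$ and $G(t)+sG'(t)$ share a projective root, i.e.\ when the Wronskian-type polynomial $W=FG'-F'G$ vanishes at $t$. The leading terms of $FG'$ and $F'G$ cancel, so $\deg W\leq 2N-2$; hence if $T$ met all $2N-1$ tangent lines then $W$ would have $2N-1>\deg W$ roots and vanish identically, forcing $F$ and $G$ proportional and contradicting that $T$ has codimension two. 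So these tangent lines admit no transversal, and by the unconditional direction of the reformulation they are higgledy-piggledy.

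For the lower bound in part (i) I would again prove the contrapositive: any $m\leq N+\lfloor N/2\rfloor-1$ lines possess a common transversal $T$. Since $k$ lines span projective dimension at most $2k-1$, any $\lfloor N/2\rfloor$ of the lines span dimension at most $2\lfloor N/2\rfloor-1\leq N-1$ and so lie in a common hyperplane $H_1$; writing $T=H_1\cap H_2$, these lines are automatically met for every choice of $H_2$. The remaining $r=m-\lfloor N/2\rfloor\leq N-1$ lines meet $H_1$ in $r$ points spanning dimension at most $r-1\leq N-2$, so a whole pencil of hyperplanes passes through them, and choosing $H_2\neq H_1$ from this pencil makes $T=H_1\cap H_2$ a codimension-two transversal of all $m$ lines. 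As $|\F|\geq N+\lfloor N/2\rfloor>m$, the reformulation converts this into a failure of cutting, so a higgledy-piggledy set needs at least $N+\lfloor N/2\rfloor$ lines. Part (iii) is the analogous lower bound over an algebraically closed field, sharpened to $2N-1$; here the crude $H_1,H_2$ construction is far too weak, so instead I would pass to the Grassmannian $G(N-2,N)$ of codimension-two subspaces, which has dimension $2N-2$. The condition of meeting a fixed line is the special Schubert class $\sigma_1$ of codimension one, and since $\sigma_1^{2N-2}\neq 0$ the intersection of $2N-2$ such conditions is non-empty over an algebraically closed field even for lines in special position (by properness and semicontinuity of fibre dimension); hence any $2N-2$ lines have a transversal and cannot be higgledy-piggledy.

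The main obstacle is the interplay between field size and the existence of \emph{rational} transversals. Over an algebraically closed field the bare dimension count $\dim G(N-2,N)=2N-2$ is decisive (part iii), whereas over a field of size only $N+\lfloor N/2\rfloor$ one cannot expect a transversal to $2N-2$ lines, and the entire gain must instead be squeezed out of deliberately forcing $\lfloor N/2\rfloor$ lines into a single hyperplane (part i). Consequently the delicate points are verifying that the Schubert number $\sigma_1^{2N-2}$ is genuinely nonzero and that non-emptiness persists for arbitrary non-generic line positions (the technical heart of part iii), whereas the degree drop in the Wronskian is the single computation that pins part (ii) down at exactly $2N-1$.
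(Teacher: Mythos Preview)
The paper does not prove this theorem at all; it is quoted verbatim from Fancsali and Sziklai \cite{FancsaliSziklai1} (Theorems~14, 24 and 26 there), so there is no in-paper argument to compare against. The only fragments that appear in the present paper are the two halves of your transversal reformulation, which are cited (not proved) just before Proposition~\ref{minHP} as \cite[Theorem~11 and Lemma~12]{FancsaliSziklai1}.

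For what it is worth, your sketch is correct and essentially reconstructs the original Fancsali--Sziklai arguments: the tangent lines to the rational normal curve together with the Wronskian degree drop is exactly their mechanism for part~(ii), and the ``pack $\lfloor N/2\rfloor$ lines into a hyperplane, then absorb the remaining $\leq N-1$ intersection points into a second hyperplane'' is their proof of part~(i). One small wording issue in (i): some of the remaining $r$ lines might themselves lie in $H_1$, in which case they meet $H_1$ in a line rather than a point; this is harmless (any line in $H_1$ meets the codimension-one subspace $T\subset H_1$), but the phrase ``meet $H_1$ in $r$ points'' should be relaxed accordingly. For part~(iii), your Schubert-calculus outline is a valid route; the cleanest way to secure non-emptiness for \emph{arbitrary} (possibly special) lines is to observe that each $\Sigma_\ell$ is a Pl\"ucker hyperplane section of $G(N-2,N)$ and then invoke the projective dimension theorem on $G(N-2,N)\cap H_{\ell_1}\cap\cdots\cap H_{\ell_{2N-2}}$, which is crisper than the semicontinuity phrasing you gave.
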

	
	Note that for $2\leq N\leq 5$, there are line sets in higgledy-piggledy arrangement in $\PG(N,q)$ of size $N + \left\lfloor N/2\right\rfloor$, provided that $q$ is large enough (see \cite{FancsaliSziklai1} and \cite{Davydov09} for $2\leq N\leq 3$, \cite{BartoliKissetal} for $N=4$, and \cite{Francescoetal} for $N=5$). The weakness of Theorem \ref{HPlines} is that it requires $q$ to be large (both for the construction and for the lower bound), whereas the typical approach in coding theory is to fix $q$ and let the length of the code vary. The only known construction of line sets in higgledy-piggledy arrangement that works for general $N$ and $q$ is the so-called tetrahedron: take $N+1$ points of $\PG(N,q)$ in general position, and then the $\binom{N+1}{2}$ lines joining these points are easily seen to be in higgledy-piggledy arrangement (see \cite{alfarano2019geometric, bonini2020minimal, Davydov09}). However, this construction is much larger than the expected minimum. Also, \cite{Davydov09} gives a slightly smaller $N$-fold strong blocking set for general $N$ and $q$, as well as a general construction for $(t+1)$-fold strong blocking sets in $\PG(N,q)$. 
	
	Thus it is of interest to construct line sets in higgledy-piggledy arrangement in $\PG(N,q)$ of small size from two points of view. First, they give rise to short minimal codes. Second, to determine whether the lower bound remains valid for small $q$ (and possibly large dimension) as well. The proof of our main result Theorem \ref{probab}  relies on a probabilistic construction of higgledy-piggledy line arrangements in $\PG(N,q)$ containing less than $2N-1$ lines (see Theorem \ref{main1}), hence it improves Theorem \ref{HPlines} \textit{ii)}. Furthermore, following this idea, a simple randomised computer search (see Section \ref{sec:randomq2}) provides examples of higgledy-piggledy line sets in $\PG(N,2)$ of size less than $N+\lfloor N/2\rfloor$ for particular small values of $N$, hence we obtain that the lower bound $N+\lfloor N/2\rfloor$ (Theorem \ref{HPlines} \textit{i)}) is not universally valid. On the other hand, we show in Theorem \ref{HP_bound} and  Remark \ref{HPLBimprovement} that the assumption in Theorem \ref{HPlines} \textit{i)} may be relaxed a bit. Let us also remark that the construction behind Theorem \ref{HPlines} and the ones mentioned after it are all based on careful selection of lines, and have an algebraic fashion. In contrast, in the proof of Theorem \ref{main1} we select lines of $\PG(N,q)$ randomly and prove that this results in a higgledy-piggledy line set of size smaller than $2N-1$ with positive probability.
	
	Let us mention that in $\PG(2,q)$, that is, projective planes, cutting blocking sets coincide with \emph{double blocking sets} (point sets which intersect each line in at least two points). For lower bounds on the size of a double blocking set and constructions of the currently known smallest examples, we refer to \cite{BB, BLSSz, CsH, DBHSzVdV}.
	
	Finding short minimal codes, that is, small multifold strong blocking sets, has relevance in another code theoretic aspect as well, since multifold strong blocking sets are linked to {\em covering codes}. From a geometric perspective, these objects correspond to {\em saturating sets} in projective spaces.

	\begin{defi}[Saturating sets]
		A point set $S \subset \PG(N,q)$ is \em{$\rho$-saturating} if for any point $Q$ of $\PG(N,q) \setminus S$ there exist $\rho+1$ points in $S$ generating a subspace of $\PG(N,q)$ which contains $Q$, and $\rho$ is the smallest value with this property. Equivalently, the subspaces of dimension $\rho$ which are generated by the $(\rho+1)$-tuples of $S$ must cover every point of the space. The smallest size of a $\rho$-saturating set in $\PG(N,q)$ is denoted by $s_q(N,\rho)$.
	\end{defi}
	
	\begin{defi}[Covering radius, covering code]
		\label{radius} The \emph{covering radius} of an $[n,n-r]_{q}$
		code is the least integer $R$ such that the space $\F_{q}^{n}$ is covered by spheres of radius $R$ centered on codewords. If an $[n,n-r]_{q}$ code has covering radius $R$, then it is referred to as an $[n,n-r]_{q}R$ covering code.
	\end{defi}
	Note that we can apply the following equivalent description. A linear  code of co-dimension $r$ has \emph{covering radius} $R$ if every (column) vector of $\F_{q}^{r}$ is equal to a linear combination of $R$ columns of a parity check matrix of the code, and $R$ is the smallest value with this property.

	The covering problem for codes is that of finding codes
	with small covering radius with respect to their lengths and
	dimensions. {\em Covering codes} are those codes which are investigated from the point of view of the above covering problem. Usually the parameters for the covering radius and the co-dimension are fixed and one seeks a good upper bound for the length of the corresponding covering codes. 
	
	\begin{defi}
		The length function $l_q(r,R)$ is the smallest length of a $q$-ary linear code of
		co-dimension $r$ and covering radius $R$.
	\end{defi}
	
	There is a one-to-one correspondence between $[n,n-r]_qR$ codes and $(R-1)$-saturating sets of size $n$ in $\PG(r-1,q)$. This implies $l_q(r,R) = s_q(r-1,R-1)$ \cite{Davydov09, DMP}.
	Applying a random construction based on point sets of subspaces in the spirit of higgledy-piggledy line sets (i.e. Theorem \ref{main1}), we improve the known upper bounds when $q$ is an $R$th power and $R\geq \frac{2}{3}r$. Let us note that these results are also related to \emph{subspace designs}; references are given in Section \ref{sec:covsatu}.

	Our paper is organised as follows. In Section \ref{sec:prelim}, we introduce the main notation and recall some basic definitions and propositions. 
	The following Section \ref{sec:geom} is mainly devoted to provide simpler geometric arguments to known bounds on the length of minimal codes, that is, the size of cutting blocking sets. Most of these results were obtained recently in the paper of Alfarano, Borello, Neri and Ravagnani \cite{alfa+neri}. We also derive some quick consequences on higgledy-piggledy line sets. 
	We continue in Section \ref{sec:prob} by a probabilistic argument which largely improves any previously known general upper bounds for cutting blocking sets, minimal codes \cite{alfa+neri, Davydov09} or higgledy-piggledy line sets \cite{FancsaliSziklai1}. Section \ref{sec:prob} mainly deals with the cases where the underlying field has more than $2$ elements, while Section \ref{sec:randomq2} is devoted to the $q=2$ case. These results in turn imply improved results on covering codes and saturating sets as well that we present in Section \ref{sec:covsatu}.

	\section{Preliminaries}\label{sec:prelim}

	The Hamming distance $d(v,c)$ of vectors $v$ and $c$ in
	$\F_{q}^{n}$ is the number of positions in which $v$ and $c$
	differ. The (Hamming) weight $w(c)$ of a vector $c$ is the number of  nonzero coordinates of $c$. 
	The smallest Hamming distance between distinct code
	vectors is called the {\em minimum distance} of the code. An
	$[n,r]_{q}$ code with minimum distance $d$ is denoted as an $
	[n,r,d]_{q}$ code. Note that for a linear code, the minimum distance is equal to the weight of a minimum weight codeword.
	The sphere of radius $R$ with center $c$
	in $\F_{q}^{n}$ is the set $\{v:v\in \F_{q}^{n},$ $d(v,c)\leq
	R\}$.
	
	For a set of points $X$ in $\PG(N,q)$, $\angbra{X}$ denotes the subspace spanned by $X$; that is, the intersection of all subspaces containing $X$.
	
	$\begin{bmatrix}n\\ k\end{bmatrix}_q$ denotes the Gaussian binomial coefficient, whose value  counts the number of subspaces of dimension $k$ in a vector space of dimension $n$ over a finite field with $q$ elements, or likewise, number of subspaces of dimension $k-1$ in a projective space $\PG(n-1, q)$ of dimension $n-1$ over $\F_q$. More precisely,
	
	$$ \begin{bmatrix}n\\ k\end{bmatrix}_q
	= \begin{cases}
		\frac{(q^n-1)(q^{n-1}-1)\cdots(q^{n-k+1}-1)} {(q-1)(q^2-1)\cdots(q^k-1)} & k \le n, \\
		0 & k>n. \end{cases}$$
	
	$\theta_n$ denotes the number of points in $\PG(n,q)$, thus $\theta_n=\sum_{t=0}^n q^t= \begin{bmatrix}n+1\\ 1\end{bmatrix}_q$.
	
	\begin{prop}\label{rajta}
		The number of $m$-dimensional subspaces containing a given $k$-dimensional subspace in $\PG(n, q)$ equals $ \begin{bmatrix}n-k\\ n-m\end{bmatrix}_q$.
	\end{prop}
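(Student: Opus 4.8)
The plan is to translate the statement into the language of the underlying vector space and then use a standard quotient argument. Recall that subspaces of projective dimension $d$ in $\PG(n,q)$ correspond bijectively to vector subspaces of dimension $d+1$ in $V=\F_q^{n+1}$, with inclusions preserved. Under this dictionary, the given $k$-dimensional projective subspace corresponds to a fixed vector subspace $U\le V$ with $\dim U=k+1$, and an $m$-dimensional projective subspace containing it corresponds to a vector subspace $W$ with $U\subseteq W\subseteq V$ and $\dim W=m+1$. Thus the quantity to be computed is exactly the number of $(m+1)$-dimensional subspaces $W$ of $V$ that contain the fixed $(k+1)$-dimensional subspace $U$.

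Next I would invoke the lattice correspondence (third isomorphism) theorem: the map $W\mapsto W/U$ is an inclusion-preserving bijection between the subspaces of $V$ that contain $U$ and the subspaces of the quotient space $V/U$, and it lowers every dimension by exactly $\dim U=k+1$. Since $\dim(V/U)=(n+1)-(k+1)=n-k$, an $(m+1)$-dimensional $W$ is sent to a subspace of $V/U$ of dimension $(m+1)-(k+1)=m-k$. Hence the desired number equals the number of $(m-k)$-dimensional subspaces of an $(n-k)$-dimensional vector space over $\F_q$, which by the very definition of the Gaussian binomial coefficient is $\qbinom{n-k}{m-k}$.

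Finally I would rewrite this in the form claimed in the statement using the symmetry $\qbinom{a}{b}=\qbinom{a}{a-b}$ of the Gaussian binomial coefficient; taking $a=n-k$ and $b=m-k$ gives $a-b=n-m$, so $\qbinom{n-k}{m-k}=\qbinom{n-k}{n-m}$, as required. The symmetry itself is immediate either from the explicit product formula or, combinatorially, from the duality $W\mapsto W^{\perp}$ pairing $b$-dimensional and $(a-b)$-dimensional subspaces. The only genuinely delicate point is the dimension bookkeeping caused by the ubiquitous $+1$ shift between projective and vector dimensions; once the problem is recast inside $V/U$ everything is routine. As an alternative to the quotient step one could count directly, choosing an ordered extension of a basis of $U$ to a basis of $W$ (with $q^{\,n+1}-q^{\,k+1+i}$ choices for the $(i+1)$-st new vector) and dividing by the number of such ordered extensions spanning the same $W$; this reproduces $\qbinom{n-k}{m-k}$ after simplification, but the quotient argument avoids the computation.
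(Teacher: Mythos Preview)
Your proof is correct; the quotient argument via $W\mapsto W/U$ together with the symmetry $\qbinom{a}{b}=\qbinom{a}{a-b}$ is the standard and cleanest way to establish this fact, and your dimension bookkeeping with the projective-to-vector shift is handled properly. The paper itself states this proposition without proof in the preliminaries, treating it as a well-known identity, so there is no proof in the paper to compare against.
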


	\begin{lemma}\label{GBinom-becsles}\mbox{}
		
		$ \begin{bmatrix}n\\ k\end{bmatrix}_q< q^{(n-k)k}\cdot e^{1/(q-2)}$ for $q>2$ and

		$ \begin{bmatrix}n\\ k\end{bmatrix}_2<2^{(n-k)k+1}\cdot e^{2/3}$ for $q=2$.
		
		For the special case $k=n-2$, we have\\
		$ \begin{bmatrix}n\\ n-2\end{bmatrix}_q < q^{2(n-2)}\cdot \frac{q}{q-1}\frac{q^2}{q^2-1}$ for $q\geq2$.
		
	\end{lemma}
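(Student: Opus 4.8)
The plan is to write the Gaussian binomial coefficient as a product of $k$ elementary factors and bound each factor separately. Pairing the $i$-th numerator factor with the $i$-th denominator factor in the defining formula gives
\[
\qbinom{n}{k} = \prod_{i=1}^{k}\frac{q^{n-k+i}-1}{q^{i}-1}.
\]
First I would discard the $-1$ in each numerator, using $q^{n-k+i}-1 < q^{n-k+i}$, and pull out a factor $q^{n-k}$ from each of the $k$ terms; this produces the leading power $q^{(n-k)k}$ multiplied by the ``correction product'' $\prod_{i=1}^{k}\frac{q^{i}}{q^{i}-1}=\prod_{i=1}^{k}\frac{1}{1-q^{-i}}$. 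Since each factor exceeds $1$, this product increases with $k$, so it suffices to bound its infinite-product limit $\prod_{i\ge 1}\frac{1}{1-q^{-i}}$.

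For the main case $q>2$, I would take logarithms and apply the elementary inequality $-\ln(1-x)\le \frac{x}{1-x}$ (valid for $0<x<1$), which yields $-\sum_{i\ge 1}\ln(1-q^{-i}) \le \sum_{i\ge 1}\frac{1}{q^{i}-1}$. The last sum is controlled by $\frac{1}{q^{i}-1}\le \frac{1}{q^{i-1}(q-1)}$, giving $\sum_{i\ge 1}\frac{1}{q^{i}-1}\le \frac{q}{(q-1)^2}$; as $q(q-2)\le(q-1)^2$ this is at most $\frac{1}{q-2}$, and exponentiating gives the factor $e^{1/(q-2)}$, hence the stated bound.

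The one genuinely delicate point is $q=2$, where the quantity $\frac{1}{q-2}$ degenerates. Here I would peel off the $i=1$ factor $\frac{1}{1-1/2}=2$ (this is precisely the source of the extra $+1$ in the exponent) and estimate only the tail $\prod_{i\ge 2}\frac{1}{1-2^{-i}}$. The same logarithmic inequality reduces this to showing $\sum_{i\ge 2}\frac{1}{2^{i}-1}<\frac{2}{3}$; I would split off the $i=2$ term (contributing exactly $\frac{1}{3}$) and bound the remaining tail using $2^{i}-1>3\cdot 2^{i-2}$ for $i\ge 3$, so that $\sum_{i\ge 3}\frac{1}{2^{i}-1}<\frac{1}{3}\sum_{j\ge 1}2^{-j}=\frac{1}{3}$. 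This gives $\prod_{i\ge 2}\frac{1}{1-2^{-i}}<e^{2/3}$ and hence the claimed $2^{(n-k)k+1}e^{2/3}$.

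Finally, for the special case $k=n-2$ I would avoid the infinite product entirely and bound the two factors of $\qbinom{n}{2}=\frac{q^{n-1}-1}{q-1}\cdot\frac{q^{n}-1}{q^{2}-1}$ directly: replacing the numerators by $q^{n-1}$ and $q^{n}$ respectively yields $q^{n-2}\frac{q}{q-1}\cdot q^{n-2}\frac{q^{2}}{q^{2}-1}$, which is exactly the stated bound and holds for all $q\ge 2$. I expect the $q=2$ estimate to be the main obstacle, as it is the only step requiring a nonautomatic splitting of the product and a sufficiently sharp tail bound.
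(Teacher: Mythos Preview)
Your proof is correct and follows the same overall structure as the paper's: factor out $q^{(n-k)k}$ from the product representation, bound the correction product $\prod_{i=1}^{k}\frac{q^{i}}{q^{i}-1}$, peel off the $i=1$ factor when $q=2$, and treat the case $k=n-2$ by keeping only the two relevant factors.

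The one genuine difference is in how the correction product is estimated. The paper first uses $\frac{q^{t}}{q^{t}-1}\le 1+\frac{1}{(q-1)^{t}}$ and then the monotonicity of $(1+1/i)^{i}$ to rewrite $\prod_{t=1}^{k}\bigl(1+\frac{1}{(q-1)^{t}}\bigr)\le \bigl(1+\frac{1}{(q-1)^{k}}\bigr)^{\sum_{t=1}^{k}(q-1)^{k-t}}$, after which the exponent equals $\frac{(q-1)^{k}-1}{q-2}$ and the bound $e^{1/(q-2)}$ drops out; the $q=2$ case is handled analogously with base $2$ in place of $q-1$. Your route via $-\ln(1-x)\le \frac{x}{1-x}$ and a geometric tail bound is more direct and arguably more transparent, and it gives the same constants without the algebraic regrouping trick. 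Both arguments are short; yours has the minor advantage of not needing the auxiliary monotonicity fact, while the paper's avoids logarithms entirely.
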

	
	Since this lemma is a simple but technical one, we opt to give a proof in the Appendix.

	\begin{defi}
		An $[n,k]_q$ code $\cC$ is \emph{non-degenerate}, if there is no $i\in\{1,\ldots,n\}$ such that $c_i=0$ for all $c\in\cC$. An $[n,k]_q$ code $\cC$ is \emph{projective}, if the coordinates of the codewords are linearly independent; that is, there exists no $i\neq j\in\{1,\ldots,n\}$ and $\lambda\in\F_q^*$ such that $c_i=\lambda c_j$ for every codeword $c\in\cC$. 
	\end{defi}
	
	In terms of the generator matrix $G$ of $\cC$, non-degeneracy means that every column of $G$ is nonzero, and projectivity means that no column of $G$ is a scalar multiple of any other column of $G$.
	
	\begin{defi}
		Let $\cC$ be an $[n,k]_q$ code. The \emph{support} $\sigma(c)$ of a codeword $c$ is the set of nonzero coordinates of $c$; that is, $\sigma(c)=\{i\colon c_i\neq 0\}$. A codeword $c\in\cC$ is \emph{minimal} if for every $c'\in\cC$ we have $\sigma(c')\subseteq\sigma(c)$ if and only if $c'=\lambda c$ for some $\lambda\in\F_q^*$. A codeword $c\in\cC$ is \emph{maximal} if for every $c'\in\cC$ we have $\sigma(c')\supseteq\sigma(c)$ if and only if $c'=\lambda c$ for some $\lambda\in\F_q^*$. The code $\cC$ is \em{minimal}, if each codeword of $\cC$ is minimal.
	\end{defi}
	
	Note that for each codeword $c$, $|\sigma(c)| = w(c)$. A maximum (minimum) weight codeword may not be maximal (minimal) and, also, a maximal (minimal) codeword is not necessarily a maximum (minimum) weight codeword.

	\section{Geometrical arguments and higgledy-piggledy line sets}\label{sec:geom}
	
	\subsection{Geometrical arguments}
	
	In this section we aim to emphasise the geometrical interpretation of (minimal) codes in order to apply finite geometrical tools in their analysis. This was done in \cite{alfa+neri, alfarano2019geometric, lu2019parameters, tang2019full} as well, for example, but our intention is to use finite geometrical arguments much more transparently. Most results presented here are found in \cite{alfa+neri}; however, we believe that the usefulness of our approach is justified by the simplicity of the proofs.
	
	A well-known and often exploited interpretation of linear codes is the following. Let $G$ be a generator matrix of a non-degenerate $[n,k]_q$ code $\cC$. Then the columns $G_1,\ldots,G_n$ of $G$ may be interpreted as points of the projective space $\PG(k-1,q)$. Let $S(G)=\{G_1,\ldots,G_n\}$ denote the (multi)set of points in $\PG(k-1,q)$ corresponding to $\cC$. Clearly, different generator matrices of $\cC$ yield projectively equivalent (multi)sets of $\PG(k-1,q)$. From now on, we will always assume that a generator matrix $G$ of a given code $\cC$ is fixed, and it will not cause ambiguity to omit the references for the generator matrix $G$ and, e.g., write only $S$. Note that $S$ is a set if and only if $\cC$ is projective. 
	
	\begin{notation}
		Let $u\in\F_q^k$. Then let $\Lambda_u=\{x\in\PG(k-1,q)\colon x\perp u\}$ be a hyperplane of $\PG(k-1,q)$. For a point set $S$ of $\PG(k-1,q)$, $S_u$ denotes $S\setminus \Lambda_u$. For a vector $v$ over $\F_q$, let $\angbra{v}=\{\lambda v\colon \lambda\in\F_q^*\}$.
	\end{notation}

	Each codeword $c\in\cC$ can be uniquely obtained in the form $uG$ for some $u\in\F_q^k$. Thus we may associate the hyperplane $\Lambda_u$ to $u$. Note that $\Lambda_u=\Lambda_{u'}$ if and only if $\angbra{u}=\angbra{u'}$ if and only if $\angbra{uG}=\angbra{u'G}$; that is, each hyperplane of $\PG(k-1,q)$ corresponds to a set of $q-1$ (nonzero) codewords of $\cC$, which form the nonzero vectors of a one-dimensional linear subspace of $\F_q^n$.

	\begin{lemma}\label{minmax}
		Let $\cC$ be an $[n,k]_q$ code with generator matrix $G=(G_1,\ldots,G_n)$, and let $c=uG\in\cC$, $u\in\F_q^k$. Let $S=\{G_1,\ldots,G_n\}$ be the corresponding point set of $\PG(k-1,q)$. Then 
		\begin{itemize}
			\item $c$ is a minimal codeword if and only if $\angbra{\Lambda_u\cap S}=\Lambda_u$ in $\PG(k-1,q)$;
			\item $c$ is a maximal codeword if and only if $S_u$ intersects every hyperplane of $\PG(k-1,q)$ different from $\Lambda_u$; that is, $S_u$ is an affine blocking set (with respect to hyperplanes) in $\PG(k-1,q)\setminus\Lambda_u \simeq \AG(k-1,q)$.
		\end{itemize}
	\end{lemma}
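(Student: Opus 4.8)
The plan is to translate both statements into the hyperplane-intersection language set up just above, exploiting the correspondence between codewords $c=uG$ and hyperplanes $\Lambda_u=\{x\colon x\perp u\}$, together with the basic fact that the $i$-th coordinate of $uG$ is $u\cdot G_i=\langle u,G_i\rangle$. The key observation is that $i\in\sigma(c)$ precisely when $G_i\notin\Lambda_u$, equivalently $G_i\in S_u$; hence the support of $c=uG$ is carried by the points of $S$ lying off the hyperplane $\Lambda_u$, while the zero coordinates correspond to the points $S\cap\Lambda_u$. This single dictionary converts the purely algebraic containment of supports into geometric statements about which points lie on or off hyperplanes, and it is the engine for both bullets.

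First I would prove the minimality criterion. By definition $c$ is minimal iff every codeword $c'=u'G$ with $\sigma(c')\subseteq\sigma(c)$ is a scalar multiple of $c$. Rephrasing support containment via the dictionary, $\sigma(c')\subseteq\sigma(c)$ means that every point $G_i$ that $c'$ detects off its hyperplane is also detected off $\Lambda_u$; contrapositively, every $G_i\in S\cap\Lambda_u$ also lies in $S\cap\Lambda_{u'}$, i.e. $S\cap\Lambda_u\subseteq\Lambda_{u'}$. The heart of the argument is then a standard spanning dichotomy: if $\langle S\cap\Lambda_u\rangle=\Lambda_u$, then any hyperplane $\Lambda_{u'}$ containing $S\cap\Lambda_u$ must contain all of $\Lambda_u$, which for distinct hyperplanes forces $\Lambda_{u'}=\Lambda_u$, hence $\langle u'\rangle=\langle u\rangle$ and $c'=\lambda c$; conversely, if $\langle S\cap\Lambda_u\rangle=\Pi$ is a proper subspace of $\Lambda_u$, one can choose a hyperplane $\Lambda_{u'}\supseteq\Pi$ with $\Lambda_{u'}\neq\Lambda_u$, producing a codeword $c'$ with $\sigma(c')\subsetneq\sigma(c)$ that is not a scalar multiple of $c$, contradicting minimality. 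This gives the equivalence $c$ minimal $\iff\langle\Lambda_u\cap S\rangle=\Lambda_u$.

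For the maximality criterion the argument runs dually on the off-hyperplane sets: $\sigma(c)\subseteq\sigma(c')$ translates to $S_u\subseteq S_{u'}$, and using that $S$ spans the whole space (from non-degeneracy, so that no hyperplane contains all of $S$) the maximality of $c$ becomes the requirement that no codeword properly enlarges $S_u$. Unwinding this, $c$ is maximal iff $S_u$ meets every hyperplane distinct from $\Lambda_u$, which is exactly the assertion that $S_u$ is an affine blocking set with respect to hyperplanes in the affine space $\PG(k-1,q)\setminus\Lambda_u\simeq\AG(k-1,q)$. I expect the main obstacle to be bookkeeping the degenerate boundary cases cleanly—in particular verifying that a proper spanning subspace of $\Lambda_u$ can always be extended to a hyperplane $\Lambda_{u'}\neq\Lambda_u$ (which needs a dimension count valid for all $k$), and handling the possibility that $S$ is a multiset so that the correspondence between points off a hyperplane and nonzero coordinates respects multiplicities. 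Both are routine once the dictionary $i\in\sigma(uG)\iff G_i\notin\Lambda_u$ is fixed, but they are the places where care is required.
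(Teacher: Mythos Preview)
Your proposal is correct and follows essentially the same route as the paper: both arguments rest on the dictionary $\sigma(u'G)\subseteq\sigma(uG)\Longleftrightarrow S\cap\Lambda_u\subseteq\Lambda_{u'}$ and then treat the two bullets by the spanning dichotomy for $\Lambda_u\cap S$ and by the affine blocking-set reformulation for $S_u$, exactly as you outline. The only minor remark is that your appeal to non-degeneracy in the maximality part is not actually needed (and the lemma as stated does not assume it); the equivalence ``$c$ maximal $\Longleftrightarrow$ every hyperplane $\Lambda_{u'}\neq\Lambda_u$ meets $S_u$'' follows directly from the support dictionary without invoking that $S$ spans $\PG(k-1,q)$.
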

	\begin{proof}
		For any two codewords $c=uG$ and $c'=u'G$, we clearly have $\sigma(u'G)\subseteq\sigma(uG)$ if and only if $S_{u'}\subseteq S_u$ if and only if $\Lambda_u\cap S \subseteq \Lambda_{u'}\cap S$. 
		
		Fix now $c=uG$. Suppose now that $\angbra{\Lambda_u\cap S}$ is contained in a $2$-codimensional subspace of $\PG(k-1,q)$. Consider a hyperplane $\Lambda_{u'}\neq\Lambda_u$ containing this subspace. Then, clearly, $\angbra{u}\neq\angbra{u'}$ and $S_{u'}\subseteq S_u$, so $c$ is not minimal. On the other hand, if $\Lambda_u\cap S$ spans $\Lambda_u$, then $\sigma(u'G)\subseteq\sigma(uG)$ yields $\Lambda_u\cap S \subseteq \Lambda_{u'}\cap S$, whence $\Lambda_u=\Lambda_{u'}$ and $\angbra{u}=\angbra{u'}$, so $c=uG$ is minimal.
		
		If for some $u'\in\F_q^k$ we have $\angbra{u}\neq\angbra{u'}$, then $\sigma(uG)\not\subseteq\sigma(u'G)$ is equivalent to $\Lambda_{u'}\cap S \not\subseteq \Lambda_{u}\cap S$, which holds if and only if $\Lambda_{u'}$ contains a point of $S\setminus\Lambda_u=S_u$. That is, $c=uG$ is maximal if and only if $S_u$ is an affine blocking set in $\PG(k-1,q)\setminus\Lambda_u$.
	\end{proof}
	
	\begin{cor}\label{mincodecutbl}
		Let $\cC$ be a non-degenerate $[n,k]_q$ code with generator matrix $G=(G_1,\ldots,G_n)$. Let $S=\{G_1,\ldots,G_n\}$ be the corresponding point set of $\PG(k-1,q)$. Then $\cC$ is a minimal code if and only if $S$ is a cutting blocking set.
	\end{cor}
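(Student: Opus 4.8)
The plan is to derive this corollary directly from the first bullet of Lemma \ref{minmax}, which already characterises a single minimal codeword, and then to quantify that characterisation over all codewords. Recall that the definition of a cutting blocking set (a cutting $1$-blocking set, here with $N=k-1$) demands that \emph{every} hyperplane $H$ of $\PG(k-1,q)$ meet $S$ in a point set spanning $H$, i.e. $\angbra{H\cap S}=H$. On the other hand, $\cC$ is minimal precisely when \emph{every} codeword is minimal, and by Lemma \ref{minmax} the codeword $c=uG$ is minimal if and only if $\angbra{\Lambda_u\cap S}=\Lambda_u$. So the statement will follow once I match these two families of conditions.

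The key step is to use the hyperplane--codeword dictionary recorded just before Lemma \ref{minmax}. Namely, for nonzero $u\in\F_q^k$ the set $\Lambda_u$ is a hyperplane of $\PG(k-1,q)$; one has $\Lambda_u=\Lambda_{u'}$ if and only if $\angbra{u}=\angbra{u'}$; and every hyperplane arises in this way. Thus, as $u$ ranges over the nonzero vectors (equivalently, as $\angbra{u}$ ranges over the points of the dual space), $\Lambda_u$ ranges over \emph{all} hyperplanes of $\PG(k-1,q)$, each realised exactly once up to the $q-1$ scalar choices of $u$. Feeding this into the minimality criterion, the assertion ``$c=uG$ is minimal for every nonzero $u$'' becomes literally the assertion ``$\angbra{H\cap S}=H$ for every hyperplane $H$'', which is exactly the definition of $S$ being a cutting blocking set. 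This yields both implications simultaneously, so no separate forward and backward arguments are needed.

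The only points requiring care --- and the nearest thing to an obstacle --- are bookkeeping ones. First, the zero codeword ($u=0$) must be treated separately: it does not correspond to a hyperplane, but its support is empty so it is trivially minimal and imposes no condition, hence it may be ignored in the quantification. Second, I would invoke the non-degeneracy of $\cC$ explicitly, since this is precisely what guarantees that each column $G_i$ is a genuine nonzero vector, so that $S=\{G_1,\ldots,G_n\}$ is a well-defined point set of $\PG(k-1,q)$ and the notion of a cutting blocking set applies to it at all. With these two remarks in place the equivalence is immediate and requires no computation.
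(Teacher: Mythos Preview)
Your argument is correct and is essentially the same as the paper's: the paper simply notes that the corollary follows immediately from the definitions of a minimal code and a cutting blocking set together with Lemma~\ref{minmax}, which is exactly what you have unpacked. Your additional remarks about the zero codeword and the role of non-degeneracy are accurate and make explicit what the paper leaves implicit.
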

	\begin{proof}
		This follows immediately from Definitions \ref{def:mincode}, \ref{strong_def} and Lemma \ref{minmax}.
	\end{proof}
	
	The above corollary was pointed out in \cite{alfarano2019geometric} and \cite{tang2019full} as well. Furthermore, as it was observed in \cite{alfa+neri}, a code $\cC$ is a minimal code if and only if each codeword of $\cC$ is maximal. In terms of cutting blocking sets, we get the following equivalent description.
	
	\begin{proposition}\label{strblsetequiv1}
		$S$ is a cutting blocking set if and only if for any hyperplane $\Lambda$, the point set $S\setminus\Lambda\subseteq \PG(N,q)\setminus\Lambda\simeq\AG(N,q)$ is an affine blocking set in $\PG(N,q)\setminus\Lambda$ (with respect to hyperplanes).
	\end{proposition}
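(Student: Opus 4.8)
The plan is to unwind both conditions into the same concrete statement about $S$ and then dispatch each implication by a one-line dimension count. First I would fix a hyperplane $\Lambda$ and record the standard dictionary between $\PG(N,q)\setminus\Lambda\simeq\AG(N,q)$ and $\PG(N,q)$: the affine hyperplanes of $\AG(N,q)$ are precisely the sets $H\setminus\Lambda$, where $H$ ranges over the projective hyperplanes of $\PG(N,q)$ with $H\neq\Lambda$ (here $\Lambda$ plays the role of the hyperplane at infinity). Consequently, $S\setminus\Lambda$ is an affine blocking set (with respect to hyperplanes) if and only if for every hyperplane $H\neq\Lambda$ the set $S$ contains a point of $H\setminus\Lambda$, i.e.\ a point lying on $H$ but not on $\Lambda$. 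The proposition thus asserts that $\angbra{S\cap H}=H$ for every hyperplane $H$ if and only if, for every pair of distinct hyperplanes $H,\Lambda$, the set $S$ meets $H\setminus\Lambda$.

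For the forward implication I would argue contrapositively. Suppose $S\setminus\Lambda$ fails to block some affine hyperplane, i.e.\ there are distinct hyperplanes $H,\Lambda$ with $(S\setminus\Lambda)\cap H=\emptyset$. Then $S\cap H\subseteq\Lambda$, so $S\cap H\subseteq H\cap\Lambda$. Since $H$ and $\Lambda$ are distinct hyperplanes, $H\cap\Lambda$ has dimension $N-2$, whence $\angbra{S\cap H}\subseteq H\cap\Lambda\subsetneq H$, and therefore $S$ is not a cutting blocking set.

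For the reverse implication, suppose $S$ is not a cutting blocking set, so $\angbra{S\cap H}\neq H$ for some hyperplane $H$. Then $T:=\angbra{S\cap H}$ is a proper subspace of $H$, hence contained in some $(N-2)$-dimensional subspace $W\subseteq H$. The hyperplanes through $W$ form a pencil of size $q+1$ containing $H$, so I may pick one, call it $\Lambda$, with $\Lambda\neq H$. Then $S\cap H\subseteq T\subseteq W\subseteq\Lambda$, so $(S\setminus\Lambda)\cap H=\emptyset$; thus $S\setminus\Lambda$ misses the affine hyperplane $H\setminus\Lambda$ and is not an affine blocking set.

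The whole argument reduces to two short dimension counts, so there is no serious obstacle; the only point needing care is the first paragraph's identification of affine hyperplanes with projective hyperplanes distinct from $\Lambda$, which is precisely what makes the two formulations literally the same condition on $S$. I would also note that the statement can alternatively be read off from Lemma~\ref{minmax} together with the observation (recalled just above) that a code is minimal exactly when all of its codewords are maximal, but the direct geometric argument above is shorter and self-contained.
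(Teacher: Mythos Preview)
Your proof is correct and is precisely the ``easy to prove directly'' argument the paper alludes to but does not write out; you also correctly note the alternative derivation via Lemma~\ref{minmax} and the minimal/maximal observation, which is exactly the second route the paper mentions. There is nothing to add.
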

	\begin{proof}
		This is easy to prove directly; but also follows immediately from Corollary \ref{mincodecutbl}, Lemma \ref{minmax}, and the above observation about the maximality of the codewords of a minimal code.
	\end{proof}
	
	Let us give yet another straightforward but useful description of cutting blocking sets, with which one may also consider them as blocking sets (transversals) of certain hypergraphs.
	
	\begin{proposition}\label{strblsetequiv2}
		Let $\cP$ denote the set of points of $\PG(N,q)$, and let
		\[
		\mathcal{H}=\{T\subseteq\cP\mid \exists H, H' \mbox{ subspaces s.t.\ } \dim{H}=N-1, \dim{H'}=N-2, H'\subset H, T=H\setminus H'\}.
		\]
		Then $S$ is a cutting blocking set of $\PG(N,q)$ if and only if $S\cap T\neq\emptyset$ for all $T\in\mathcal{H}$.
	\end{proposition}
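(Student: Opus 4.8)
The plan is to establish the equivalence by connecting the defining property of a cutting blocking set directly to the hypergraph transversal condition. Recall that $S$ is a cutting blocking set means that for every hyperplane $H$ (subspace of dimension $N-1$), the intersection $S\cap H$ spans $H$, i.e.\ $\angbra{S\cap H}=H$. The key observation is that a point set $P\subseteq H$ spans $H$ if and only if $P$ is not contained in any hyperplane of $H$, that is, in any $(N-2)$-dimensional subspace $H'\subset H$. So the plan is to translate ``$S\cap H$ spans $H$'' into ``$S\cap H$ is not contained in any $H'$'' and then rewrite this as a non-empty intersection with the sets $T=H\setminus H'$.

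First I would fix a hyperplane $H$ and argue the following chain of equivalences. The point set $S\cap H$ fails to span $H$ precisely when there is a proper subspace of $H$ containing it; since every proper subspace of $H$ lies in some $(N-2)$-dimensional subspace $H'\subset H$, this happens exactly when $S\cap H\subseteq H'$ for some such $H'$. Negating, $S\cap H$ spans $H$ if and only if for every $(N-2)$-dimensional $H'\subset H$ we have $S\cap H\not\subseteq H'$, i.e.\ $S$ meets $H\setminus H'=T$. Thus the spanning condition at the single hyperplane $H$ is equivalent to $S\cap T\neq\emptyset$ for all $T\in\mathcal{H}$ of the form $T=H\setminus H'$ with this particular $H$.

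Next I would quantify over all hyperplanes $H$. Running the above equivalence over every hyperplane $H$ of $\PG(N,q)$ shows that $S$ is a cutting blocking set (the spanning condition holds for all $H$) if and only if $S\cap T\neq\emptyset$ for every $T\in\mathcal{H}$, since $\mathcal{H}$ is by definition exactly the collection of all such differences $H\setminus H'$ ranging over all admissible pairs $H'\subset H$. This completes the proof.

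I do not expect a serious obstacle here; the statement is essentially a reformulation and the argument is short. The one point requiring mild care is the claim that a subset of $H$ spans $H$ if and only if it lies in no hyperplane of $H$, together with the fact that every proper subspace is contained in a codimension-one subspace $H'$ of $H$ — both are standard facts about projective spaces but should be stated cleanly to keep the ``iff'' tight in both directions.
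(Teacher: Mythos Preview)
Your argument is correct. The paper's own proof is a one-liner invoking Proposition~\ref{strblsetequiv1} (the affine blocking set characterization): from that viewpoint, the sets $T\in\mathcal{H}$ are exactly the affine hyperplanes $H\setminus\Lambda$ in $\PG(N,q)\setminus\Lambda$, parametrized by ordered pairs of distinct projective hyperplanes $(H,\Lambda)$ with $H'=H\cap\Lambda$. You instead argue directly from the spanning definition of a cutting blocking set, bypassing Proposition~\ref{strblsetequiv1}. The two routes are equivalent and equally short; yours is marginally more self-contained, while the paper's makes the link to affine blocking sets explicit.
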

	\begin{proof}
		This is just a reformulation of Proposition \ref{strblsetequiv1}.
	\end{proof}
	
	As seen above, affine blocking sets are tightly connected to cutting blocking sets, hence it is useful to recall a fundamental result on their sizes.
	
	\begin{theorem}[Jamison \cite{Jamison}, Brouwer--Schrijver \cite{BrouwerSchrijver}]\label{Jamison}
		Suppose that $\cB$ is a blocking set of $\AG(N,q)$; that is, $\cB$ is a set of points which intersects each hyperplane of $\AG(N,q)$. Then $|\cB|\geq N(q-1)+1$.
	\end{theorem}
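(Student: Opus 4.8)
The plan is to prove this classical bound by the polynomial method, in the style of Brouwer and Schrijver. First I would normalise the configuration: since affine translations permute the hyperplanes of $\AG(N,q)=\F_q^N$ and preserve the blocking property, I may assume that the origin $O$ lies in $\cB$. Writing $\cB'=\cB\setminus\{O\}$ and $m=|\cB|$, the hyperplanes through $O$ are automatically met by $O$, so the content of the hypothesis is that every affine hyperplane avoiding $O$ is met by $\cB'$. Such hyperplanes are exactly the sets $\{x\colon a\cdot x=1\}$, where $a\cdot x=\sum_i a_ix_i$ and $a$ ranges over $\F_q^N\setminus\{0\}$, each hyperplane arising from a unique such $a$. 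Thus the hypothesis reads: for every $a\neq 0$ there is some $b\in\cB'$ with $a\cdot b=1$.

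The key step is to encode this as a degree bound for a polynomial in $N$ variables $a_1,\dots,a_N$. I would consider $F(a_1,\dots,a_N)=\prod_{b\in\cB'}(1-a\cdot b)$, a product of $m-1$ linear factors, hence of total degree at most $m-1$. By the reformulation above, $F(a)=0$ for every $a\neq 0$, whereas $F(0)=1$; that is, $F$ represents the indicator function of $\{0\}$ on $\F_q^N$. I would then compare $F$ with the explicit polynomial $\Delta(a)=\prod_{i=1}^N(1-a_i^{q-1})$, which represents the same function, since $a_i^{q-1}$ equals $1$ for $a_i\neq 0$ and $0$ for $a_i=0$, and which has total degree exactly $N(q-1)$.

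To conclude, I would reduce $F$ modulo the relations $a_i^q=a_i$, which hold as identities of functions on $\F_q^N$, repeatedly replacing $a_i^q$ by $a_i$. Each such replacement strictly lowers the degree of the affected monomial, so the reduced polynomial $\tilde F$ satisfies $\deg\tilde F\le\deg F\le m-1$ while still agreeing with $F$ as a function on $\F_q^N$. Since $\tilde F$ now has every individual degree at most $q-1$, and a function $\F_q^N\to\F_q$ admits a unique representative with all individual degrees below $q$, we are forced to have $\tilde F=\Delta$, whence $\deg\tilde F=N(q-1)$. Combining the inequalities yields $N(q-1)\le m-1$, i.e.\ $|\cB|\ge N(q-1)+1$.

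The main obstacle, and the place where care is needed, is this final uniqueness-and-reduction step: one must verify both that the substitution $a_i^q\mapsto a_i$ never increases the total degree and that the reduced (individual-degree $<q$) representative of a function on $\F_q^N$ is genuinely unique, so that $\tilde F$ is forced to coincide with $\Delta$ rather than merely agree with it pointwise. Everything else is routine bookkeeping: the parametrisation of the hyperplanes avoiding $O$ by $a\cdot x=1$, and the degree count for the product $F$.
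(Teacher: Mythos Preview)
The paper does not prove this theorem; it is quoted as a classical result with references to Jamison and to Brouwer--Schrijver, and then used as a black box. Your proposal is a correct and cleanly written account of the Brouwer--Schrijver polynomial-method proof: the normalisation to $O\in\cB$, the parametrisation of affine hyperplanes avoiding $O$ by $a\cdot x=1$, the product polynomial $F$, the comparison with $\Delta$, and the reduction modulo $a_i^q-a_i$ together with uniqueness of the reduced representative are all sound, and the degree bookkeeping gives the claimed bound.
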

	
	As an immediate consequence, we get the following.
	
	\begin{theorem}[Theorem 2.8 of \cite{alfa+neri}]\label{maxcwweight}
		Let $\cC$ be an $[n,k]_q$ code. If $c\in\cC$ is maximal, then $w(c)\geq (k-1)(q-1)+1$. Thus the minimum weight of a minimal $[n,k]_q$ code is at least $(k-1)(q-1)+1$.
	\end{theorem}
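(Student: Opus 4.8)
The plan is to reduce the weight statement about a maximal codeword to the affine blocking set bound of Theorem~\ref{Jamison} via the geometric dictionary already established in Lemma~\ref{minmax}. First I would take a maximal codeword $c=uG\in\cC$ and apply the second bullet of Lemma~\ref{minmax}, which tells us precisely that $S_u=S\setminus\Lambda_u$ is an affine blocking set (with respect to hyperplanes) in $\PG(k-1,q)\setminus\Lambda_u\simeq\AG(k-1,q)$. Since $\AG(k-1,q)$ is an affine space of dimension $k-1$, Theorem~\ref{Jamison} (with $N$ replaced by $k-1$) yields $|S_u|\geq (k-1)(q-1)+1$.

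The remaining step is to connect $|S_u|$ to the weight $w(c)$. Each codeword $c=uG$ has $c_i=u\cdot G_i$, so the support $\sigma(c)$ consists of exactly those coordinates $i$ for which $G_i\notin\Lambda_u$, i.e.\ those columns lying in $S_u$. Thus $w(c)=|\sigma(c)|=|S_u|$ (counting with multiplicity if $\cC$ is not projective, but in any case $w(c)\geq|S_u|$ as a point set, which suffices for the inequality). Combining this with the bound above gives $w(c)\geq(k-1)(q-1)+1$, as claimed.

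For the final sentence of the statement, I would invoke the observation recorded just before Proposition~\ref{strblsetequiv1}: a code is minimal if and only if every one of its codewords is maximal. Hence if $\cC$ is a minimal $[n,k]_q$ code, every nonzero codeword is maximal, and the weight bound just proved applies to all of them; in particular it applies to a minimum weight codeword, so the minimum distance is at least $(k-1)(q-1)+1$.

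I do not expect any genuine obstacle here, since all the conceptual work is done by Lemma~\ref{minmax} and Theorem~\ref{Jamison}; the only point requiring a little care is the bookkeeping between support size and the cardinality of $S_u$ in the possibly non-projective (multiset) case. In that setting one should note that $w(c)$ counts coordinates, which is at least the number of distinct points of $S_u$, so the inequality is preserved. This is exactly the kind of routine verification I would state but not belabor.
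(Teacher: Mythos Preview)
Your proposal is correct and follows essentially the same route as the paper: invoke the second bullet of Lemma~\ref{minmax} to see that $S_u$ is an affine blocking set in $\AG(k-1,q)$, then apply Jamison's Theorem~\ref{Jamison} and identify $w(c)$ with $|S_u|$. You supply a little more detail than the paper does (the multiset bookkeeping and the explicit appeal to the ``minimal $\Leftrightarrow$ every codeword is maximal'' observation for the final sentence), but the argument is the same.
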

	\begin{proof}
		Let $S$ be the point set of $\PG(k-1,q)$ associated to $\cC$ via the generator matrix $G$. Let $c=uG$. By Lemma \ref{minmax}, $S_u$ is an affine blocking set in $\PG(k-1,q)\setminus\Lambda_u\simeq\AG(k-1,q)$. By Jamison's Theorem, $w(u)=|S_u|\geq (q-1)(k-1)+1$.
	\end{proof}
	
	\begin{cor}[\cite{alfa+neri, alfarano2019geometric,  tang2019full}]\label{corsize}
		Let $\cC$ be a minimal $[n,k]_q$ code. Then $n\geq q(k-1)+1$.
	\end{cor}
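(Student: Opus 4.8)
The plan is to split the point set associated to $\cC$ by a hyperplane and bound the two parts separately, exploiting that a cutting blocking set is simultaneously spanning on every hyperplane and an affine blocking set off every hyperplane. The bound $q(k-1)+1$ is a sharpening of Theorem \ref{maxcwweight}, and the extra term comes precisely from the points lying \emph{on} the chosen hyperplane.

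Concretely, let $G=(G_1,\ldots,G_n)$ be a generator matrix and let $S=\{G_1,\ldots,G_n\}\subseteq\PG(k-1,q)$ be the corresponding point set, so that $n=|S|$ (for a lower bound we may pass to the underlying set of distinct points, since repeated columns only increase $n$). By Corollary \ref{mincodecutbl}, minimality of $\cC$ means exactly that $S$ is a cutting blocking set of $\PG(k-1,q)$. Fix an arbitrary hyperplane $\Lambda$, i.e.\ a subspace of projective dimension $k-2$, and estimate $|S\cap\Lambda|$ and $|S\setminus\Lambda|$ in turn.

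First, since $S$ is a cutting (that is, cutting $1$-) blocking set, Definition \ref{strong_def} guarantees that $S\cap\Lambda$ spans $\Lambda$. A spanning set of a projective subspace of dimension $k-2$ has at least $k-1$ points, so $|S\cap\Lambda|\geq k-1$. Second, by Proposition \ref{strblsetequiv1} the complement $S\setminus\Lambda$ is an affine blocking set in $\PG(k-1,q)\setminus\Lambda\simeq\AG(k-1,q)$, whence Jamison's Theorem \ref{Jamison} gives $|S\setminus\Lambda|\geq (k-1)(q-1)+1$. Adding these disjoint contributions yields
\[
n=|S|=|S\cap\Lambda|+|S\setminus\Lambda|\geq (k-1)+\big((k-1)(q-1)+1\big)=q(k-1)+1,
\]
as desired.

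I do not anticipate a genuine obstacle; the only point to get right is that one must not merely use the affine blocking estimate $|S\setminus\Lambda|\geq (k-1)(q-1)+1$ (which only reproduces the minimum-weight bound of Theorem \ref{maxcwweight}), but also harvest the $k-1$ points that the spanning condition forces onto $\Lambda$, and it is this extra summand that lifts the bound to $q(k-1)+1$. As an alternative route one could instead average the codeword weights, using $\sum_{c\in\cC}w(c)=n\,q^{k-1}(q-1)$ together with Theorem \ref{maxcwweight} and the elementary inequality $q^{k-1}\geq 1+(k-1)(q-1)$; this also yields $n\geq q(k-1)+1$, but the hyperplane-splitting argument is more transparent and better fits the geometric spirit of this section.
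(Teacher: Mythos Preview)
Your proof is correct and follows essentially the same approach as the paper: split $S$ by a hyperplane, use the spanning condition to get $k-1$ points on it and Jamison's theorem to get $(k-1)(q-1)+1$ points off it, then add. The paper phrases the two bounds via the minimality and maximality of a fixed codeword $c=uG$ (Lemma~\ref{minmax}), whereas you invoke the equivalent geometric statements (Corollary~\ref{mincodecutbl} and Proposition~\ref{strblsetequiv1}) directly, but the argument is the same.
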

	\begin{proof}
		Let $S$ be the point set of $\PG(k-1,q)$ associated to $\cC$ via the generator matrix $G$. Take a codeword $c=uG$ in $\cC$. As $c$ is maximal, $w(c)=|S_u|\geq(q-1)(k-1)+1$. As $c$ is minimal, $\Lambda_u\cap S$ spans $\Lambda_u$, hence $|\Lambda_u\cap S|\geq k-1$. Thus $n=|S|\geq (q-1)(k-1)+1 + (k-1)$.
	\end{proof}
	
	The proof of Theorem \ref{maxcwweight} in \cite{alfa+neri} uses the Alon-F\"uredi Theorem, while \cite{tang2019full} also uses Jamison's theorem to derive Corollary \ref{corsize}. For connections among the Alon-F\"uredi Theorem, Jamison's Theorem and the (punctured) Combinatorial Nullstellensatz, we refer to \cite{ballserra}.

	In what follows, we give a compact geometrical proof of Theorem 2.14 of \cite{alfa+neri}. Let us first formalise the statement in terms of cutting blocking sets.
	
	\begin{theorem}[Theorem 2.14 of \cite{alfa+neri}]\label{cuttinglb}
		A cutting blocking set in $\PG(N,q)$ contains at least $N(q+1)$ points.
	\end{theorem}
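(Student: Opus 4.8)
The plan is to bound $|S|$ by isolating a well-chosen codimension-two subspace and applying Jamison's theorem to the pencil of hyperplanes through it. First I would record the elementary fact that a cutting blocking set $S$ must span $\PG(N,q)$: otherwise $S$ would lie in some hyperplane $H_0$, and then any other hyperplane $H_1$ would meet $S$ inside the codimension-two space $H_0\cap H_1$, contradicting that $S\cap H_1$ spans $H_1$. Consequently $S$ contains $N-1$ linearly independent points, whose span is a subspace $\Pi$ of dimension $N-2$ with $|S\cap\Pi|\ge N-1$.

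Next I would exploit the $q+1$ hyperplanes $H_0,\dots,H_q$ through $\Pi$, which partition $\PG(N,q)\setminus\Pi$ into the affine pieces $H_i\setminus\Pi$. By Proposition \ref{strblsetequiv1}, for each $H_i$ the set $S\setminus H_i$ is an affine blocking set, so Jamison's Theorem \ref{Jamison} gives $|S\setminus H_i|\ge N(q-1)+1$, i.e.\ $|S\cap H_i|\le |S|-N(q-1)-1$. Writing $a=|S\cap\Pi|$ and $b=|S|-a=|S\setminus\Pi|$, and summing the identities $|S\cap(H_i\setminus\Pi)|=|S\cap H_i|-a\le b-(N(q-1)+1)$ over $i=0,\dots,q$ (the left-hand sides add up to exactly $b$), I obtain $b\le (q+1)\bigl(b-(N(q-1)+1)\bigr)$, that is
\[
b\ \ge\ \frac{(q+1)\bigl(N(q-1)+1\bigr)}{q}.
\]
Combining with $a\ge N-1$ and simplifying the numerator yields $|S|=a+b\ge N(q+1)-\tfrac{N-1}{q}$, which already forces the desired bound $|S|\ge N(q+1)$ whenever $q\ge N$.

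The main obstacle is the residual integrality gap for small $q$ (equivalently, large $N$): the estimate only gives $|S|\ge N(q+1)-\lfloor (N-1)/q\rfloor$, so to recover the exact bound in full generality one must locate a codimension-two subspace $\Pi$ carrying slightly more than $N-1$ points of $S$ — roughly $|S\cap\Pi|\ge (N-1)(q+1)/q$ — rather than the bare $N-1$ guaranteed by spanning alone. I expect this enrichment to be the crux, and I would try to secure it either by an averaging argument over codimension-two flats (a richer flat should exist because $S$ is already large, $|S|\ge qN+1$ by Corollary \ref{corsize}), or by an induction on $N$ in which $S$ is projected from one of its own points onto the quotient $\PG(N-1,q)$, where the image is again a cutting blocking set, and a rich flat is transported upward. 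Checking that such an induction does not itself forfeit the extra unit is the delicate point I would have to handle carefully.
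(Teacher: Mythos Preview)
Your pencil-through-$\Pi$ argument is correct as far as it goes, but the gap you identify for $q<N$ is genuine and neither of your proposed fixes is likely to close it cleanly. The averaging idea would need to produce a codimension-two flat with $|S\cap\Pi|\ge N-1+\lceil (N-1)/q\rceil$, and a straightforward double count over all such flats does not obviously yield this; nor is it clear how to bootstrap from the weaker bound $|S|\ge qN+1$. The projection-from-a-point induction does give a cutting blocking set $S'$ in $\PG(N-1,q)$, but you only get $|S|\ge |S'|+1\ge (N-1)(q+1)+1$ unless you can certify at least $q$ collisions on lines through the projection centre, and there is no evident reason that many collinearities must occur. In short, the fractional deficit $(N-1)/q$ is a real obstruction for this line of attack, and it is exactly the deficit that reappears in the paper's Theorem \ref{HP_bound} on higgledy-piggledy lines.

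The paper circumvents the issue by a different mechanism: instead of averaging over a pencil, it picks a hyperplane $H$ that \emph{maximises} $|H\cap S|$, then passes to a \emph{minimal} affine blocking set $S_H'\subseteq S\setminus H$ and uses its minimality to find a hyperplane $U$ meeting $S_H'$ in a single point. This forces $|S_H\setminus U|\ge N(q-1)$, and now pigeonhole over only the $q-1$ hyperplanes through $H\cap U$ other than $H$ and $U$ gives one hyperplane $Z$ with at least $N$ points of $S_H\setminus U$. Comparing $|Z\cap S|$ to $|H\cap S|$ via maximality of $H$ then yields $|S\setminus U|\ge Nq$ exactly, with no fractional loss; adding $|S\cap U|\ge N$ finishes. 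The two extremal choices---a \emph{richest} hyperplane and a \emph{minimal} affine blocking set---are what make the bound land on the integer, and this ingredient is absent from your scheme.
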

	\begin{proof}
		Let $S$ be a cutting blocking set, and let $H$ be a hyperplane for which $|H\cap S|$ is maximal. By Proposition \ref{strblsetequiv1}, $S_{H}=S\setminus H$ is an affine blocking set in $\PG(N,q)\setminus H$ (with respect to hyperplanes). Let $S_H'$ be a minimal affine blocking set inside $S_H$ (with respect to set theoretical containment), and let $P\in S_H'$. By Theorem \ref{Jamison}, $|S_H'|\geq N(q-1)+1$. Since $S_H'\setminus\{P\}$ is not an affine blocking set, there exists a hyperplane $U$ for which $S_H'\cap U = \{P\}$. Then $|S_H\setminus U|\geq |S_H'\setminus U|\geq N(q-1)$. By the pigeonhole principle, one of the $q-1$ hyperplanes containing $H\cap U$ different from $H$ and $U$ contains at least $N$ points of $|S_H\setminus U|$. Let $Z$ be such a hyperplane. Then, by the choice of $H$, we have 
		\begin{eqnarray*}
			|H\cap S|\geq |Z\cap S| &=& |Z \cap S_H| + |Z \cap (S \cap H)| = |Z \cap (S_H\setminus U)| + |U \cap (S \cap H)| \\
			&=& |Z \cap (S_H\setminus U)| + |S\cap H|+ |(S\setminus H)\setminus U| - |S\setminus U| \\
			&\geq& N + |S\cap H| + N(q-1) - |S\setminus U|,
		\end{eqnarray*}
		whence $|S\setminus U| \geq Nq$. As $S\cap U$ spans $U$, $|S\cap U|\geq N$, thus $|S| = |S\setminus U| + |S\cap U|\geq N(q+1)$.
	\end{proof}
	
	Let us remark that if $N\leq q$ holds, then this result follows immediately from the well-known fact that under the assumption $N\leq q$, any $t$-fold $1$-blocking set of $\PG(N,q)$ contains at least $t(q+1)$ points (recall that a cutting blocking set is an $N$-fold $1$-blocking set as well). However, Theorem \ref{cuttinglb} makes no assumption on $q$ or $N$, which makes it much more useful from the coding theoretic aspect. Let us formulate the coding theoretic version of Theorem \ref{cuttinglb}.
	
	\begin{theorem}[Theorem 2.14 of \cite{alfa+neri}]
		Let $\cC$ be a minimal $[n,k]_q$ code. Then $n\geq (q+1)(k-1)$.
	\end{theorem}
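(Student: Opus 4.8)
The plan is to read off this statement as an immediate corollary of the geometric bound Theorem~\ref{cuttinglb} (a cutting blocking set in $\PG(N,q)$ has at least $N(q+1)$ points) via the code--geometry dictionary of Corollary~\ref{mincodecutbl}. So the substantive work is already done, and the proof reduces to transporting the geometric inequality back to the coding side with the dimension shift $N=k-1$.

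First I would dispose of degeneracy, since Corollary~\ref{mincodecutbl} is phrased for non-degenerate codes. If $\cC$ is degenerate, let $I\subseteq\{1,\ldots,n\}$ index the coordinates that vanish on every codeword, and puncture them to obtain a code $\cC'$ of length $n-|I|\le n$. Deleting the corresponding zero columns of $G$ does not change the rank, so $\dim\cC'=k$; and since these coordinates never lie in any support, the inclusion relations among supports are unaffected, so $\cC'$ is again minimal. Thus it suffices to prove the bound assuming $\cC$ non-degenerate, which I do henceforth.

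Next, let $S=\{G_1,\ldots,G_n\}$ be the point set of $\PG(k-1,q)$ associated to $\cC$ via its generator matrix. By Corollary~\ref{mincodecutbl}, minimality of $\cC$ is equivalent to $S$ being a cutting blocking set of $\PG(k-1,q)$. Applying Theorem~\ref{cuttinglb} with $N=k-1$ then gives $|S|\ge (k-1)(q+1)$, where $|S|$ counts \emph{distinct} points. Finally, $S$ comprises at most $n$ distinct points (the $n$ columns of $G$, which may coincide projectively when $\cC$ is not projective), so $n\ge |S|\ge (k-1)(q+1)$, as desired.

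There is no genuine obstacle here: the only care needed is the bookkeeping around degeneracy and the multiset-versus-set distinction in passing from the $n$ columns of $G$ to the underlying point set $S$. All the combinatorial content of the bound is carried by Theorem~\ref{cuttinglb}, which I take as given.
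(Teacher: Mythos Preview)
Your proof is correct and follows exactly the paper's route: the theorem is presented there as the coding-theoretic reformulation of Theorem~\ref{cuttinglb} via Corollary~\ref{mincodecutbl}, with no separate argument given. Your additional care about degeneracy and the multiset--versus--set distinction is sound bookkeeping that the paper simply leaves implicit.
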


	\subsection{Notes on higgledy-piggledy line sets}
	
	In the following, let us slightly strengthen Theorem \ref{HPlines} on the size of higgledy-piggledy line sets.
	
	\begin{lemma}\label{HPlines-hyperplane}
		Let $\cL$ be a higgledy-piggledy line set of $\PG(N,q)$, and suppose that a hyperplane $H$ contains $t$ lines of $\cL$. Then $|\cL|\geq N + t - \left\lfloor\frac{N-1}{q}\right\rfloor$.
	\end{lemma}
	\begin{proof}
		Since $S=\cup_{\ell\in\cL}\ell$ is a cutting blocking set, $S\setminus H$ is an affine blocking set, and thus it contains at least $N(q-1)+1$ points (see Jamison's Theorem \ref{Jamison}). Since each line of $\cL$ contains at most $q$ points of $S\setminus H$, we need at least $\frac{|S\setminus H|}{q}\geq N - \frac{N-1}{q}$ lines to cover the points of $S\setminus H$. Together with the lines contained in $H$, this proves the assertion.
	\end{proof}
	
	\begin{theorem}\label{HP_bound}
		A line set of $\PG(N,q)$ in higgledy-piggledy arrangement contains at least $N + \left\lfloor \frac{N}{2}\right\rfloor - \left\lfloor\frac{N-1}{q}\right\rfloor$ elements.
	\end{theorem}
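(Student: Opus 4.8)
The plan is to deduce the statement directly from Lemma~\ref{HPlines-hyperplane} by producing a hyperplane that already contains $\left\lfloor N/2\right\rfloor$ lines of $\cL$. The key realization is that such a hyperplane costs nothing to obtain: \emph{any} $\left\lfloor N/2\right\rfloor$ lines of $\cL$ automatically lie in a common hyperplane, simply because they cannot span more than a hyperplane. Granting this, taking $t=\left\lfloor N/2\right\rfloor$ in Lemma~\ref{HPlines-hyperplane} immediately gives $|\cL|\geq N+\left\lfloor N/2\right\rfloor-\left\lfloor\frac{N-1}{q}\right\rfloor$.

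First I would check that $\cL$ is large enough for the argument to make sense, i.e.\ that it contains at least $\left\lfloor N/2\right\rfloor$ lines. Since $S=\bigcup_{\ell\in\cL}\ell$ is a cutting blocking set, Theorem~\ref{cuttinglb} gives $|S|\geq N(q+1)$; as each line of $\cL$ carries exactly $q+1$ points, $|\cL|\,(q+1)\geq |S|\geq N(q+1)$, so $|\cL|\geq N\geq \left\lfloor N/2\right\rfloor$. Next I would pick any $\left\lfloor N/2\right\rfloor$ lines $\ell_1,\dots,\ell_{\lfloor N/2\rfloor}\in\cL$ and bound the dimension of their span: a single line has projective dimension $1$, and adjoining each further line raises the dimension by at most $2$, so $\dim\angbra{\ell_1,\dots,\ell_{\lfloor N/2\rfloor}}\leq 2\left\lfloor N/2\right\rfloor-1\leq N-1$. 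Hence these lines are contained in some hyperplane $H$ of $\PG(N,q)$, and $H$ contains at least $\left\lfloor N/2\right\rfloor$ lines of $\cL$.

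Finally I would apply Lemma~\ref{HPlines-hyperplane} to this hyperplane $H$ with $t=\left\lfloor N/2\right\rfloor$, obtaining $|\cL|\geq N+\left\lfloor N/2\right\rfloor-\left\lfloor\frac{N-1}{q}\right\rfloor$, which is exactly the claimed bound. There is no substantial obstacle here; the entire content is the elementary span estimate $2\left\lfloor N/2\right\rfloor-1\leq N-1$ (valid for both parities of $N$), which guarantees that $\left\lfloor N/2\right\rfloor$ lines never escape a hyperplane, together with the previously established Lemma~\ref{HPlines-hyperplane}. The mild point worth stating explicitly is that, in contrast to Theorem~\ref{HPlines}~\textit{i)}, this reasoning makes no assumption on the size of $q$, which is precisely the relaxation being sought.
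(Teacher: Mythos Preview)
Your proposal is correct and follows essentially the same approach as the paper: show that $\left\lfloor N/2\right\rfloor$ lines of $\cL$ span a subspace of dimension at most $2\left\lfloor N/2\right\rfloor-1\leq N-1$, hence lie in a hyperplane, and then apply Lemma~\ref{HPlines-hyperplane} with $t=\left\lfloor N/2\right\rfloor$. The only cosmetic difference is in verifying $|\cL|\geq\left\lfloor N/2\right\rfloor$: the paper notes that $\cL$ cannot lie in a hyperplane (else $S$ would fail to be a cutting blocking set), which forces $2|\cL|-1\geq N$, whereas you invoke Theorem~\ref{cuttinglb} to get the stronger $|\cL|\geq N$; both are fine.
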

	\begin{proof}
		Let $\cL$ be a higgledy-piggledy line set of $\PG(N,q)$, and let $S=\cup_{\ell\in\cL}\ell$.
		As $x$ lines of $\PG(N,q)$ span a subspace of dimension at most $2x-1$, we can find a hyperplane $H$ which contains at least $\left\lfloor\frac{N}{2}\right\rfloor$ lines of $\cL$ (we need that $|\cL|\geq \left\lfloor\frac{N}{2}\right\rfloor$, but this is clear since $\cL$ cannot be contained in a hyperplane). Apply Lemma \ref{HPlines-hyperplane} to finish the proof.
	\end{proof}
	
	\begin{remark}\label{HPLBimprovement}
		Note that if $q\geq N$, the above result yields that a higgledy-piggledy line set contains at least $N+\left\lfloor\frac{N}{2}\right\rfloor$ lines. Comparing this to  the assumption $q\geq N+\left\lfloor \frac{N}{2}\right\rfloor$ of Theorem \ref{HPlines}, one can see that Theorem \ref{HP_bound} gives a strengthening of the theorem of Fancsali and Sziklai.
	\end{remark}

	Fancsali and Sziklai prove that if there is no $(N-2)$-dimensional subspace intersecting every line of a line set in $\PG(N,q)$, then the line set is higgledy-piggledy \cite[Theorem 11]{FancsaliSziklai1}, and they also prove that if $q>|\cL|$ for a higgledy-piggledy line set $\cL$, then $\cL$ has the aforementioned property \cite[Lemma 12]{FancsaliSziklai1}. Thus they call this property `almost equivalent' to being higgledy-piggledy. In the light of these considerations, it might be somewhat surprising that minimal higgledy-piggledy line sets \emph{always} admit an $(N-2)$-dimensional subspace which intersects all but possibly one of their lines. 
	
	\begin{proposition}\label{minHP}
		Suppose that $\cL=\{\ell_1,\ldots,\ell_m\}$ is a minimal set of higgeldy-piggledy lines in $\PG(N,q)$ (that is, $S=\cup_{i=1}^m \ell_i$ is a cutting blocking set of $\PG(N,q)$, but for all $j\in\{1,\ldots,m\}$, $S_j:=\cup_{\substack{i=1\\i\neq j}}^m \ell_i$ is not a cutting blocking set). Then for all $j\in\{1,\ldots,m\}$, there exists a subspace $\Lambda_j$ of co-dimension $2$ which intersects $\ell_i$ for each $i\in\{1,\ldots,m\}\setminus\{j\}$. Moreover, there exists a hyperplane $H_j$ containing $\Lambda_j$, which contains only those lines of $\cL\setminus\{\ell_j\}$ that are contained in $\Lambda_j$.
	\end{proposition}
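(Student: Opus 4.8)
The plan is to extract the codimension-$2$ subspace directly from the failure of $S_j$ to be a cutting blocking set, and then to verify the claimed transversal property line by line. Recall from Definition \ref{strong_def} (or the reformulation in Proposition \ref{strblsetequiv2}) that $S_j$ is a cutting blocking set precisely when $\angbra{S_j\cap H}=H$ for every hyperplane $H$; hence $S_j$ fails to be a cutting blocking set exactly when there is a hyperplane on whose points $S_j$ does not span. Since $\cL$ is minimal, for each $j$ the set $S_j=\cup_{i\neq j}\ell_i$ is not a cutting blocking set, so I can fix a hyperplane $H_j$ with $\angbra{S_j\cap H_j}\subsetneq H_j$. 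As $\angbra{S_j\cap H_j}$ has dimension at most $N-2$ and lies inside $H_j$, I would then choose a codimension-$2$ subspace $\Lambda_j\subseteq H_j$ containing it; this is the candidate transversal, and by construction it satisfies $S_j\cap H_j\subseteq\Lambda_j$.

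Next I would prove that $\Lambda_j$ meets $\ell_i$ for every $i\neq j$, using the elementary dichotomy for a line against a hyperplane. Fix $i\neq j$, so $\ell_i\subseteq S_j$. If $\ell_i\subseteq H_j$, then $\ell_i=\ell_i\cap H_j\subseteq S_j\cap H_j\subseteq\Lambda_j$, so the whole line lies in $\Lambda_j$. Otherwise $\ell_i\not\subseteq H_j$, and then $\ell_i\cap H_j$ is a single point $P$; since $P\in\ell_i\subseteq S_j$ and $P\in H_j$, we get $P\in S_j\cap H_j\subseteq\Lambda_j$, whence $P\in\ell_i\cap\Lambda_j$. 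Either way $\ell_i\cap\Lambda_j\neq\emptyset$, which gives the first assertion.

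For the ``moreover'' part I would take the same hyperplane $H_j$, which contains $\Lambda_j$ by construction, and determine which lines of $\cL\setminus\{\ell_j\}$ it contains. If $\ell_i\subseteq\Lambda_j$, then trivially $\ell_i\subseteq H_j$. Conversely, if $\ell_i\subseteq H_j$ for some $i\neq j$, then the first case of the dichotomy above already showed $\ell_i\subseteq\Lambda_j$. Hence $H_j$ contains exactly those lines $\ell_i$ with $i\neq j$ that are contained in $\Lambda_j$, as required.

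I do not expect a genuine obstacle here: the argument uses only that $S_j$ is not a cutting blocking set (the full minimality of $\cL$ is not needed beyond this single failure) together with the fact that a line is either contained in a hyperplane or meets it in a single point. The one step requiring care is the choice of $\Lambda_j$: one must pass from the spanned subspace $\angbra{S_j\cap H_j}$, whose dimension may be strictly less than $N-2$, to a genuine codimension-$2$ subspace inside $H_j$, and check that the containment $S_j\cap H_j\subseteq\Lambda_j$ survives this enlargement. It is worth remarking that the first assertion is essentially the contrapositive of \cite[Theorem 11]{FancsaliSziklai1}; the added value of the direct argument is that it simultaneously produces the hyperplane $H_j$ with the stated intersection behaviour.
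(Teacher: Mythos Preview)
Your argument is correct and essentially the same as the paper's: both pick a hyperplane $H_j$ with $S_j\cap H_j$ contained in some codimension-$2$ subspace $\Lambda_j\subset H_j$, then run the line-versus-hyperplane dichotomy. The only cosmetic difference is that the paper obtains $\Lambda_j$ as $H_1\cap H_2$ via the affine-blocking-set reformulation (Proposition~\ref{strblsetequiv1}), whereas you extend $\angbra{S_j\cap H_j}$ to a hyperplane of $H_j$ directly; the resulting objects and the remaining verification coincide.
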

	\begin{proof}
		Fix $j\in\{1,\ldots,m\}$. Since $S_j$ is not a cutting blocking set, there exist a hyperplane $H_1$ for which the point set $S_j\setminus H_1$ is not an affine blocking set in $\PG(N,q)\setminus H_1$, that is, there exists another hyperplane $H_2$ such that $H_2\cap (S_j\setminus H_1)=\emptyset$. As each line intersects $H_2$, this means that for each $i\in\{1,\ldots,m\}\setminus\{j\}$, $\ell_i\cap H_2\in H_1$. Thus the subspace $H_1\cap H_2$ and the hyperplane $H_2$ are appropriate choices to prove the assertion.
	\end{proof}
	
	Note that it might occur that a subspace of co-dimension $2$ blocks every line of a higgledy-piggledy line set $\cL$ of $\PG(N,q)$. In fact, this is the case whenever our line set has at most $N+\lfloor N/2\rfloor -1$ elements \cite[Lemma 13]{FancsaliSziklai1}, in which case $|\cL|\geq q+1$  holds \cite[Lemma 12]{FancsaliSziklai1}; but, if $q$ is small compared to $N$, the latter conclusion is meaningless. In particular, when $q=2$, there exist examples of higgledy-piggledy line sets of $\PG(N,2)$ of size less than $N+\lfloor N/2\rfloor$, see Section \ref{sec:randomq2}. As seen, these must admit a subspace of dimension $(N-2)$ intersecting all their lines.
	
	Let us point out that if $q$ is small, then, by the pigeonhole principle, Proposition \ref{minHP} yields that the $q+1$ hyperplanes passing through the $(N-2)$-dimensional subspace found therein behave unbalanced regarding the number of lines of $\cL$ they contain.

	\section{Probabilistic approach}\label{sec:prob}
	
	\begin{theorem}\label{main1}
		There exists a cutting blocking set in $\PG(N,q)$ of size at most $m(q+1)$ which consists of the points of at most $m$ lines, where 
		\[m = 
		\left\{
		\begin{matrix}
			\left\lceil \frac{2}{1+\frac{1}{\ln(q)(q+1)^2}}N\right\rceil & \normalsize\mbox{if } q>2, \\
			& \\
			\left\lceil1.95N\right\rceil & \mbox{if } q=2.
		\end{matrix}
		\right.\]
	\end{theorem}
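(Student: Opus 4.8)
The plan is to realise the cutting blocking set as the union $S=\ell_1\cup\dots\cup\ell_m$ of $m$ lines $\ell_1,\dots,\ell_m$ chosen independently and uniformly at random in $\PG(N,q)$, and to show that $S$ is cutting with positive probability. Since a union of $m$ lines has at most $m(q+1)$ points and consists of at most $m$ lines (lines may coincide or meet), both the size and the line-count claims are automatic, so the entire content is a positive-probability existence statement. The natural reformulation to exploit is Proposition \ref{strblsetequiv2}: a point set is cutting if and only if it meets every member of
\[
\mathcal{H}=\{\,H\setminus H'\mid H \text{ a hyperplane},\ H'\subset H \text{ a subspace of codimension } 2\,\}.
\]
Thus it suffices to bound the probability that the random union misses at least one $T\in\mathcal{H}$.

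First I would apply the union bound. Write $p$ for the probability that a single uniformly random line is disjoint from a fixed $T=H\setminus H'$; this quantity is independent of $T$ because $\mathrm{PGL}(N+1,q)$ acts transitively on flags $H'\subset H$ of this type. Independence of the $m$ lines then gives
\[
\Pr[S \text{ is not cutting}]\le\sum_{T\in\mathcal{H}}\Pr[\ell_1,\dots,\ell_m \text{ all miss } T]=|\mathcal{H}|\cdot p^{\,m},
\]
so it is enough to choose $m$ with $|\mathcal{H}|\,p^{m}<1$, i.e.\ $m>\ln|\mathcal{H}|/\ln(1/p)$. The two remaining ingredients are purely enumerative. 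Each $T$ corresponds to a unique pair $(H,H')$, and since every codimension-$2$ subspace lies in exactly $q+1$ hyperplanes (Proposition \ref{rajta}), $|\mathcal{H}|=(q+1)\qbinom{N+1}{N-1}$, which I would bound from above via the special case of Lemma \ref{GBinom-becsles}. For $p$, a line either lies in $H$ or meets $H$ in a single point, and it avoids $T=H\setminus H'$ precisely when it lies in $H'$ or meets $H$ in a point of $H'$; counting these two families gives
\[
p=\frac{\qbinom{N-1}{2}+\theta_{N-2}\,q^{N-1}}{\qbinom{N+1}{2}}.
\]
A short estimate shows $p\le\frac{q^3-q+1}{q^4}=\frac1q\bigl(1-\tfrac1{q^2}+\tfrac1{q^3}\bigr)$ for every $N$, whence $-\ln(1-x)\ge x$ with $x=\frac{q-1}{q^3}$ yields $\ln(1/p)\ge\ln q+\frac{q-1}{q^3}\ge\ln q+\frac1{(q+1)^2}$, the last inequality reducing to $q^2-q-1\ge0$.

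Putting these together, the special case of Lemma \ref{GBinom-becsles} gives $\ln|\mathcal{H}|<2N\ln q$ (the factor $q+1$ and the constant being absorbed into the gap between $2(N-1)\ln q$ and $2N\ln q$), while $\ln(1/p)\ge\ln q+\frac1{(q+1)^2}=\ln q\bigl(1+\tfrac1{(q+1)^2\ln q}\bigr)$, so that
\[
\frac{\ln|\mathcal{H}|}{\ln(1/p)}<\frac{2N}{1+\frac{1}{(q+1)^2\ln q}}
\]
for $q>2$; rounding up gives the stated value of $m$. For $q=2$ the same scheme applies, but both the Gaussian-binomial bound (which then carries an extra factor $2$) and the value $\ln(1/p)\ge\ln(16/7)$ are weaker, so the estimate is carried out with explicit numerical constants and yields the cruder coefficient $1.95$ rather than a $1/\ln q$-type correction.

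I expect the main obstacle to be exactly this final bookkeeping. The strategy itself—reformulate cutting via $\mathcal{H}$, use transitivity to make $p$ uniform, union-bound, and solve for $m$—is clear, but to land precisely on the constant $2/\bigl(1+1/((q+1)^2\ln q)\bigr)$, and on $1.95$ when $q=2$, one must control the lower-order terms in both $|\mathcal{H}|$ and $p$ sharply and verify the resulting inequality for all $N$ rather than merely asymptotically, since for small $N$ the additive constants in $\ln|\mathcal{H}|$ are not negligible against $2N\ln q$; the small cases can be checked by hand using the exact value of $p$ above.
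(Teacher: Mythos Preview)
Your approach is correct and is genuinely different from---and simpler than---the paper's. The paper does not use Proposition~\ref{strblsetequiv2}; instead it bounds the failure probability by splitting into two events: (a) some $(N-3)$-dimensional subspace meets every line $\ell_i$, or (b) some $(N-2)$-dimensional subspace $\Lambda$ meets every $\ell_i$ and, in addition, one of the $q+1$ hyperplanes through $\Lambda$ is not hit by any $\ell_i$ outside $\Lambda$. These are estimated separately via Inequality~\eqref{nemures} and a conditional-probability argument, then combined. Your single union bound over the $\theta_N\theta_{N-1}$ elements of $\mathcal{H}$, with the uniform value $p=\Pr[\ell\cap T=\emptyset]$, bypasses the case split entirely; after simplification the paper's dominant term $p_{N-2}$ in \eqref{eq:4.7} is essentially your $|\mathcal{H}|\,p^{m}$ (in fact slightly weaker, since $1-\tfrac{q-1}{q^3}<1-\tfrac{1}{(q+1)^2}$), so nothing is lost.

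Two remarks. First, the bookkeeping you worry about is painless: the inequality $\ln|\mathcal{H}|<2N\ln q$ follows from $|\mathcal{H}|=\theta_N\theta_{N-1}<q^{2N+1}/(q-1)^2$ and reduces to $q<(q-1)^2$, valid for every $q\ge3$ and every $N$---no small-$N$ check is needed. Second, for $q=2$ your method actually outperforms the paper's constant: with $p<7/16$ and $|\mathcal{H}|<2^{2N+1}$ one obtains $m>(2N+1)\ln2/\ln(16/7)\approx1.68N+0.84$, comfortably below $\lceil1.95N\rceil$. The paper's $1.95$ arises from its less efficient treatment of the $(N-3)$-dimensional term when $q=2$.
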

	
	In other words, this theorem ensures the existence of $m$ lines in higgledy-piggledy arrangement. Note that the multiplier of $N$  is strictly smaller than $2$.

	\begin{proof}
		Let us take a projective space $\PG(N,q)$, and choose $m$ lines, $\ell_1, \ell_2, \ldots \ell_{m}$  uniformly at random. We denote  this multiset by $\mathcal{L}=\{\ell_1, \ell_2, \ldots \ell_{m}\}$.
		
		Our aim is to bound from below the probability that the point set  $\B= \bigcup_{i=1}^{m} \ell_i$ intersects every hyperplane  $H$ in a subset which spans $H$ itself.
		
		Clearly, by the definition of cutting blocking sets,
		\[
		\PP(\B \mbox{ is a cutting blocking set}) =
		1-\PP(\,\exists\, H \colon \dim H =N-1, \ \langle  H \cap \B\rangle \neq  H).
		\]
		
		Observe that for a hyperplane $H$, $\langle  H \cap \B\rangle \neq  H$ implies that $\dim\langle  H \cap \B\rangle < N-1$. Since every  $\ell_i$ intersects $H$, the intersections must be covered by a subspace $\Lambda$ of dimension at most $N-2$. Clearly, if there exists a subspace of dimension at most $N-3$ intersecting every $\ell_i$, then there exists such a subspace of dimension exactly $N-3$ as well. Moreover, if the dimension of a covering subspace $\Lambda$ is $N-2$, then none of the lines $\ell_i$ intersects $H\setminus \Lambda$.
		From this, we obtain the following bound.
		
		\begin{equation}\label{eq:probs}
			\begin{split}
				\PP(\B \mbox{ is a cutting blocking set}) \geq 
				1-\PP(\,\exists\,\Lambda \colon \dim \Lambda=N-3, \ \forall i\:\ell_i\cap \Lambda\neq \emptyset)\\
				\quad
				- 
				\PP(\,\exists\,\Lambda \colon \dim \Lambda=N-2, \ \forall i\:\ell_i\cap \Lambda\neq \emptyset, \ 
				\bigcup_i \langle (\B\cap\Lambda) \cup \ell_i\rangle \neq  \PG(N,q) ).
			\end{split}
		\end{equation}

		Firstly we give a bound to the term $\PP(\,\exists\,\Lambda \colon \dim \Lambda=d, \ \forall i\:\ell_i\cap \Lambda\neq \emptyset)$ with $d\leq N-2$ in Inequality \eqref{eq:probs}.
		
		\begin{equation}\label{eq:prob_kicsi}
			\begin{split}
				\PP(\exists \ \Lambda \colon \dim \Lambda=d, \ \forall i\: \ell_i\cap \Lambda\neq \emptyset)\leq 
				\begin{bmatrix}N+1\\ d+1\end{bmatrix}_q   \cdot
				\PP(\ell_1 \cap \Lambda\neq \emptyset)^{m},
			\end{split}
		\end{equation} 
		where the last probability is taken for a fixed $d$-dimensional subspace $\Lambda$ and the line $\ell_1$ chosen uniformly at random. Distinguishing the lines which are contained in $\Lambda$ from those which intersect $\Lambda$ in a single point, we get the formula below. (For precise details, see the Appendix.)
		\begin{equation}\label{nemures}
			\begin{split}
				\PP(\ell_1 \cap \Lambda\neq \emptyset)= \frac{ \begin{bmatrix}d+1\\ 2\end{bmatrix}_q   + \begin{bmatrix}d+1\\ 1\end{bmatrix}_q  \cdot \frac{1}{q} \left( \begin{bmatrix}N+1\\ 1\end{bmatrix}_q -\begin{bmatrix}d+1\\ 1\end{bmatrix}_q       \right)     }{ \begin{bmatrix}N+1\\ 2\end{bmatrix}_q   }\\
				< q^{d-N+1}+q^{d-N}-q^{2d-2N+1}.
			\end{split}
		\end{equation} 
		
		By combining Inequalities \eqref{eq:prob_kicsi} and \eqref{nemures} with the upper bound of Lemma \ref{GBinom-becsles} on the Gaussian binomial coefficients, we get
		%
		\begin{equation}\label{eq:intermed0}
			\begin{split}
				\PP(\,\exists\, \Lambda \colon \dim \Lambda=N-3, \ \forall i\:\ell_i\cap \Lambda\neq \emptyset)<q^{3(N-2)-2m}\cdot\left(1+\frac{1}{q}-\frac{1}{q^3}\right)^{m}\cdot\gamma(q),
			\end{split}
		\end{equation}
		where  $\gamma(q)= e^{1/(q-2)}$ for $q>2$ and $\gamma(q)= 2e^{2/3}$ for $q=2$.

		It is easy to see that if $q\geq 3$, then  $q^{3N-2m}\cdot\left(1+\frac{1}{q}\right)^{m}<1$ holds for $m\geq 1.8N$, hence 
		
		\begin{equation}\label{eq:intermed2}
			\begin{split}
				p_{<N-2}:=\PP(\,\exists\,\Lambda \colon \dim \Lambda=N-3, \ \forall i\:\ell_i\cap \Lambda\neq \emptyset)<q^{-6}\cdot\gamma(q)
			\end{split}
		\end{equation} in the case $q\geq 3$. Case $q=2$  allows us to choose 
		$m=\left\lceil\frac{\ln{8}}{\ln{(32/11)}}N\right\rceil\leq \left\lceil1.95N\right\rceil$ in order to get 
		$2^{3N-2m}\cdot\left(1+\frac{1}{2}-\frac{1}{8}\right)^{m}\leq 1$, which implies Inequality \eqref{eq:intermed2} similarly for $q=2$. 

		We continue by estimating the final summand, namely \[
		\PP(\,\exists\,\Lambda \colon \dim \Lambda=N-2, \ \forall i\:\ell_i\cap \Lambda\neq \emptyset, \ 
		\bigcup_i \langle (\B\cap\Lambda) \cup \ell_i\rangle \neq  \PG(N,q) ).
		\]
		Suppose now that every line of $\cL$ intersects a fixed subspace $\Lambda$ of dimension $N-2$. 
		We have 
		\[
		\eta:=\PP(\ell \subseteq \Lambda \ \mid \ \ell\cap \Lambda\neq \emptyset )= \frac{\begin{bmatrix}N-1\\ 2\end{bmatrix}_q}{\begin{bmatrix}N-1\\ 2\end{bmatrix}_q + (q^{N-1}+q^{N-2})\begin{bmatrix}N-1\\ 1\end{bmatrix}_q} <\frac{1}{q^3+q^2-q}.
		\]
		Observing that there are $q+1$ hyperplanes through $\Lambda$, the probability of the (bad) event that every line $\ell\in \cL$ is either included in $\Lambda$ or not included in a fixed hyperplane through $\Lambda$ can be bounded above by the following formula:
		\[ 
		\left(\eta+(1-\eta)\frac{q}{q+1}\right)^m<\left(\frac{1}{q^3+q^2-q}+\left(1-\frac{1}{q^3+q^2-q}\right)\frac{q}{q+1}\right)^{m} < \left(\frac{q}{q+1}+\frac{1}{q^3(q+1)}\right)^{m}.
		\]
		Adding the probability of these events for every hyperplane through $\Lambda$, we obtain
		\begin{equation}\label{eq:main_term}
			\begin{split}
				p_{N-2}:=\PP(\,\exists\,\Lambda \colon \dim \Lambda=N-2, \ \forall i\:\ell_i\cap \Lambda\neq \emptyset, \ 
				\bigcup_i \langle (\B\cap\Lambda) \cup \ell_i\rangle \neq  \PG(N,q) )\leq\\
				\PP\left(\,\exists\,\Lambda \colon \dim \Lambda=N-2, \ \forall i\:\ell_i\cap \Lambda\neq \emptyset\right)\cdot \left( (q+1) \left(\frac{q}{q+1}+\frac{1}{q^3(q+1)}\right)^{m} \right)
			\end{split}
		\end{equation}
		via conditional probability.
		%
		%
		%
		%
		We use Inequality \eqref{eq:prob_kicsi}, Lemma \ref{GBinom-becsles} for the $2$-codimenional case, and Inequality \eqref{nemures} 
		and apply similar calculations to those in Inequalities \eqref{eq:intermed0}  but with $d=N-2$. This provides
		\begin{equation}\label{eq:4.7}
			\begin{split}
				p_{N-2}\leq {q^{2(N-1)-m}}\cdot \frac{q}{q-1}\frac{q^2}{q^2-1}\left(1+\frac{1}{q}-\frac{1}{q^2}\right)^{m} \left(( q+1)\cdot\left(\frac{q}{q+1}+\frac{1}{q^3(q+1)}\right)^{m}\right)\\
				<\frac{q}{(q-1)^2}{q^{2N-m}}\left(1-\frac{1}{(q+1)^2}\right)^{m} .
			\end{split}
		\end{equation}
		
		Finally, suppose first that $q>2$.
		Putting all these estimates together, we get that if $m\geq 1.8N$, then
		\begin{eqnarray}
			\PP(\B \mbox{ is a cutting blocking set}) &\geq& 1 - p_{<N-2} - p_{N-2}\\
			&>& 1 - q^{-6}\cdot\gamma(q) - \frac{q^{2N-m+1}}{(q-1)^2}\left(1-\frac{1}{(q+1)^2}\right)^{m}.\label{eq:4.7i}
		\end{eqnarray}
		This event is of positive probability for the point set of $m= \left\lceil\frac{2N}{1+\frac{1}{(q+1)^2\ln(q)}}\right\rceil$ randomly chosen lines.

		If $q=2$, we apply a stronger form of Inequality \ref{eq:4.7i} by taking $\left(\frac{1}{q^3+q^2-q}+\left(1-\frac{1}{q^3+q^2-q}\right)\frac{q}{q+1}\right)$ instead of its upper bound estimate  $\left(\frac{q}{q+1}+\frac{1}{q^3(q+1)}\right)$ from Inequality \ref{eq:4.7}.
		
		Then 
		
		\begin{eqnarray*}
			\PP(\B \mbox{ is a cutting blocking set}) &\geq& 1 - p_{<N-2} - p_{N-2}\\
			&>& 1 - 2^{-5}\cdot e^{2/3}- {2^{2N-m+1}}\left(\frac{115}{132}\right)^{m}\\
			&>&0.\label{eq:4.10}
		\end{eqnarray*}
		holds for $m\geq 1.95N$ when $N>2$, while the statement is trivial for $N=2$.
	\end{proof}

	\section{Random constructions in $\PG(N,2)$} \label{sec:randomq2}
	
	When the order of the field is two or, in other words, in the case of binary minimal codes, we provide a better upper bound on the size of cutting blocking sets than the one in Theorem \ref{main1}. Let us mention that in this case, minimal codes coincide with the so-called intersecting codes. For more information about intersecting codes, we refer the reader to \cite{chabanne2013towards} and \cite{Cohen-Lempel}. Recall that for $q=2$, $\theta_i=2^i+2^{i-1}+\ldots+1=2^{i+1}-1$.
	
	\subsection{Uniform random point sets}
	
	Take a set $S$ of $x$ points of $\PG(N,2)$ uniformly at random. By Proposition \ref{strblsetequiv2}, $S$ is a cutting blocking set if and only if it intersects each element of 
	\[
	\mathcal{H}=\{T\subseteq\cP\mid \exists H, H' \mbox{ subspaces s.t } \dim{H}=N-1, \dim{H'}=n-2, H'\subset H, T=H\setminus H'\},
	\]
	where $\cP$ denotes the point set of $\PG(N,q)$. Clearly, $|\mathcal{H}|=\theta_N\theta_{N-1}$, and for each $T\in\mathcal{H}$, $|T|=\theta_{N-1}-\theta_{N-2}=2^{N-1}$. Consequently, the probability that a set $T\in\mathcal{H}$ is missed by $S$ is $\left(\frac{\theta_N-2^{N-1}}{\theta_N}\right)^x<(\frac{3}{4})^x$, which yields the following bound on the expected value of the number of elements of $\mathcal{H}$ not intersecting $S$:
	\[
	\E(T\in\mathcal{H} \ | \  S\cap T=\emptyset)< 2^{2N+1}\left(\frac{3}{4}\right)^x.
	\]
	
	The existence of a cutting blocking set of size $x$ follows if the latter formula is less than $1$, thus $x=\left\lceil\frac{\log(2)}{\log (4/3)}(2N+1) \right\rceil \approx \left\lceil2.41\cdot (2N+1) \right\rceil$ suffices. Thus we have the following result.
	
	\begin{theorem}\label{thm:binary}
		In $\PG(N,2)$, there exists a cutting blocking set of size 
		\[
		\left\lceil\frac{\log(2)}{\log (4/3)}(2N+1) \right\rceil.
		\]
	\end{theorem}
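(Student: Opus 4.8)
The plan is to prove existence by the first-moment (union bound) method, leaning directly on the hypergraph reformulation of cutting blocking sets. By Proposition \ref{strblsetequiv2}, a point set $S\subseteq\PG(N,2)$ is a cutting blocking set if and only if it is a transversal of the hypergraph
\[
\mathcal{H}=\{T\subseteq\cP\mid \exists\, H, H'\ \text{subspaces s.t.}\ \dim H=N-1,\ \dim H'=N-2,\ H'\subset H,\ T=H\setminus H'\}.
\]
So I would let $S$ be a set of $x$ points chosen uniformly at random and bound from above the probability that \emph{some} edge $T\in\mathcal{H}$ is disjoint from $S$; showing this probability is below $1$ yields a cutting blocking set of size $x$.

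First I would record the two combinatorial inputs. The edges of $\mathcal{H}$ are indexed by pairs $(H,H')$, so $|\mathcal{H}|=\theta_N\theta_{N-1}=(2^{N+1}-1)(2^N-1)<2^{2N+1}$; and every edge has the same size $|T|=\theta_{N-1}-\theta_{N-2}=2^{N-1}$. Next, for a fixed edge $T$, the probability that one random point avoids $T$ is $(\theta_N-2^{N-1})/\theta_N$, and a one-line check (cross-multiplying $3\cdot 2^{N-1}-1$ against $4\cdot 2^{N-1}-1$) shows this quantity is strictly below $3/4$. Hence $\PP(S\cap T=\emptyset)<(3/4)^x$, and by linearity of expectation the expected number of edges missed by $S$ is at most $2^{2N+1}(3/4)^x$.

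Finally I would choose $x$ to force this expectation below $1$: solving $2^{2N+1}(3/4)^x\le 1$ gives $x\ge \frac{\log 2}{\log(4/3)}(2N+1)$, so $x=\left\lceil\frac{\log 2}{\log(4/3)}(2N+1)\right\rceil$ suffices, and for this $x$ a uniformly random $S$ meets every edge of $\mathcal{H}$ with positive probability, i.e.\ is a cutting blocking set. There is no serious obstacle here: the argument is a clean first-moment computation, and the only points requiring care are the constant-size identity $|T|=2^{N-1}$ for the hyperedges and the elementary estimate $(\theta_N-2^{N-1})/\theta_N<3/4$, both of which are special features of the $q=2$ case that make the $(3/4)^x$ bound exact enough to give the stated constant.
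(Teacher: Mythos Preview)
Your proposal is correct and essentially identical to the paper's own proof: both invoke Proposition~\ref{strblsetequiv2}, count $|\mathcal{H}|=\theta_N\theta_{N-1}<2^{2N+1}$ and $|T|=2^{N-1}$, bound the miss-probability for a single edge by $(3/4)^x$, and apply the first-moment method to force the expected number of missed edges below $1$. The only cosmetic difference is that you are slightly more explicit about why $(\theta_N-2^{N-1})/\theta_N<3/4$ and about how the strict inequalities ensure the expectation is genuinely below $1$ at the ceiling value of $x$.
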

	
	Note that this random construction improves the bound of Theorem \ref{Cohen} by an additive constant. 
	
	\begin{cor}\label{cor:binary}
		\[m(k,2)\leq \frac{2k-1}{\log_2(\frac{4}{3})}.\]
	\end{cor}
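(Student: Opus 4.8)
The plan is to read this off directly from Theorem \ref{thm:binary} by invoking the dictionary between cutting blocking sets and minimal codes recorded in Corollary \ref{mincodecutbl}. Recall that a cutting blocking set of $\PG(N,q)$ of size $n$ is precisely the point set (the column set of a generator matrix) of a non-degenerate minimal $[n,N+1]_q$ code: the columns are nonzero and span the whole space, so the code has dimension $N+1$, and minimality is exactly the cutting-blocking-set condition. To manufacture a short minimal $[n,k]_2$ code I would therefore choose the projective space so that the code dimension comes out to $k$; that is, I set $N=k-1$ and look for a small cutting blocking set of $\PG(k-1,2)$.

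First I would apply Theorem \ref{thm:binary} with $N=k-1$. This yields a cutting blocking set of $\PG(k-1,2)$ of size
\[
\left\lceil\frac{\log(2)}{\log(4/3)}\bigl(2(k-1)+1\bigr)\right\rceil=\left\lceil\frac{\log(2)}{\log(4/3)}(2k-1)\right\rceil .
\]
By Corollary \ref{mincodecutbl} this set is the column set of a generator matrix of a non-degenerate minimal $[n,k]_2$ code of exactly this length. Hence, by the definition of $m(k,2)$ (Definition \ref{min_length}), the quantity $m(k,2)$ is bounded above by the displayed expression.

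It then remains only to tidy the constant. Since $\log_2(4/3)=\log(4/3)/\log(2)$, we have $\log(2)/\log(4/3)=1/\log_2(4/3)$, so the size produced is $\lceil (2k-1)/\log_2(4/3)\rceil$, and writing this integer length in the real-valued form used throughout (compare the $q=2$ clause of Theorem \ref{probab}, where such an expression denotes the smallest integer length that suffices, i.e.\ the ceiling) gives exactly the inequality stated. There is essentially no genuine obstacle here: all of the real content has already been done in the counting behind Theorem \ref{thm:binary}, and the geometric translation is supplied by Corollary \ref{mincodecutbl}. The only points that warrant a moment of care are the index shift $N=k-1$ (so that the resulting code has dimension $k$ rather than $k-1$) and the change of logarithm base from natural/decimal logs to $\log_2$.
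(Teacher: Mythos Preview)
Your proposal is correct and follows exactly the route the paper intends: apply Theorem \ref{thm:binary} with $N=k-1$, translate the resulting cutting blocking set into a minimal $[n,k]_2$ code via Corollary \ref{mincodecutbl}, and rewrite the constant as $1/\log_2(4/3)$. The paper states the corollary without proof precisely because this dictionary step is immediate; your handling of the ceiling-versus-real-valued bound is also consistent with the paper's own convention in Theorem \ref{probab}.
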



	
	

	\subsection{Explicit results for small $N$}
	
	We have utilised a computer to perform a simple Monte Carlo search.
	First we chose a set $S$ of $x$ points of uniform random distribution and then checked if the result was a cutting blocking set of $\PG(N,q)$. We tried to decrease $x$ as much as possible. The sizes of the smallest cutting blocking sets found this way are found in the next table.
	
	\begin{center}
		\begin{tabular}{c|ccccccccc}
			$N$          & 2 & 3 & 4 & 5 & 6 & 7 & 8 & 9 & 10 \\ 
			\hline
			$|S|$         & 6 & 9 & 13& 17& 22& 27& 32& 37& 44         
		\end{tabular}
	\end{center}
	
	Doing the same in order to find $m$ lines in higgledy-piggledy arrangement in $\PG(N,q)$, we obtained the results shown in the next table. For the sake of easy comparison with the bound $m\geq \lfloor 3N/2 \rfloor$ known to be valid for $q$ large enough (cf.\ Theorem \ref{HPlines} and Remark \ref{HPLBimprovement}), we inserted the value of $\lfloor 3N/2 \rfloor$ as well. Also, as the elements of a line set $\{\ell_1,\ldots,\ell_m\}$ in higgledy-piggledy arrangement are not necessarily disjoint, their union may be smaller than $m(q+1)$.
	
	\begin{center}
		\begin{tabular}{c|ccccccccc}
			$N$          & 2 & 3 & 4 & 5 & 6 & 7 & 8 & 9 & 10 \\ 
			\hline
			$\left\lfloor\frac{3N}{2}\right\rfloor$ & 
			3 & 4 & 6 & 7 & 9 & 10& 12& 13& 15 \\
			$m$           & 3 & 4 & 5 & 6 & 8 &  9& 11& 13& 14 \\          
			$|\cup_{i=1}^m \ell_i|$ & 
			6 &  9& 13& 18& 23& 27& 32& 38& 42         
		\end{tabular}
	\end{center}

	\section{Covering codes and saturating sets}\label{sec:covsatu}
	

	As we explained in detail in the Introduction, saturating sets and covering codes are corresponding objects, and bounding the size of a saturating set corresponds to bounding the length function $l_q(r,R)$ of the covering code. From now on, we use the geometric terminology.
	Let us recall the concept of $\varrho$--saturating sets of a projective plane $\PG(N, q)$.
	
	\begin{defi}
		A set $\cS$ of points of $\PG(N, q)$ is said to be {\em $\varrho$--saturating} if for any point $P \in \PG(N, q)$ there exist $\varrho + 1$ points of $\cS$ spanning a subspace of $\PG(N, q)$ containing $P$, and $\varrho$ is the smallest value with such property.
	\end{defi}
	
	Davydov, Giulietti, Marcugini and Pambianco  proved \cite{Davydov09}  a key connection between $(\varrho+1)$-fold strong blocking sets (or cutting $(N-\varrho)$ blocking sets) and $\varrho$-saturating sets.
	
	\begin{theorem}[\cite {Davydov09}, Theorem 3.2.]\label{kapcs} 
		Any $(\varrho+1)$-fold strong blocking set in a subgeometry $\PG(N, q)\subset \PG(N, q^{\varrho+1})$ is a $\varrho$-saturating set in the space $\PG(N,q^{\varrho+1})$.
	\end{theorem}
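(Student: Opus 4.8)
The plan is to exploit the standard \emph{field reduction}/subgeometry structure: a point of the large space $\PG(N,q^{\varrho+1})$ can be written, via a fixed $\F_q$-basis of $\F_{q^{\varrho+1}}$, as an $\F_{q^{\varrho+1}}$-combination of at most $\varrho+1$ vectors with entries in $\F_q$, so it automatically lies in the extension of a subspace of the subgeometry $\PG(N,q)$ of projective dimension at most $\varrho$. The $(\varrho+1)$-fold strong blocking property then supplies $\varrho+1$ points of the blocking set spanning that subspace, and these same points span the given point in the large space.

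Concretely, let $B\subseteq\PG(N,q)$ be a $(\varrho+1)$-fold strong blocking set, regarded also as a point set of $\PG(N,q^{\varrho+1})$; I first note that the canonical subgeometry embeds injectively, since two vectors over $\F_q$ that are proportional over $\F_{q^{\varrho+1}}$ are already proportional over $\F_q$. Fix an $\F_q$-basis $\omega_0,\dots,\omega_\varrho$ of $\F_{q^{\varrho+1}}$. Given an arbitrary point $P=\angbra{\mathbf p}$ of $\PG(N,q^{\varrho+1})$ with $\mathbf p=(p_0,\dots,p_N)$, I would write each coordinate as $p_i=\sum_{j=0}^{\varrho}a_{ij}\omega_j$ with $a_{ij}\in\F_q$, obtaining $\mathbf p=\sum_{j=0}^{\varrho}\omega_j\mathbf v_j$ where $\mathbf v_j=(a_{0j},\dots,a_{Nj})\in\F_q^{N+1}$. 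Let $\Pi\subseteq\PG(N,q)$ be the subgeometry subspace corresponding to the $\F_q$-span of $\mathbf v_0,\dots,\mathbf v_\varrho$; since there are at most $\varrho+1$ such vectors, $\dim\Pi\le\varrho$, and because $\mathbf p$ is an $\F_{q^{\varrho+1}}$-combination of the $\mathbf v_j$, the point $P$ lies in the extension $\Pi^{*}$ of $\Pi$ to the large space.

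Next I would reduce everything to the top-dimensional case. Choosing a $\varrho$-dimensional subspace $\Sigma$ of $\PG(N,q)$ with $\Pi\subseteq\Sigma$ (possible as $\dim\Pi\le\varrho\le N$), the strong blocking property gives that $B\cap\Sigma$ spans $\Sigma$ over $\F_q$; hence $B\cap\Sigma$ contains $\varrho+1$ points $Q_0,\dots,Q_\varrho$ whose representatives form an $\F_q$-basis of $\Sigma$. The crucial observation is then that $\F_q$-linear independence is preserved over $\F_{q^{\varrho+1}}$ (the relevant maximal minor already lies in $\F_q$ and is nonzero there), so $Q_0,\dots,Q_\varrho$ span the extension $\Sigma^{*}$ in $\PG(N,q^{\varrho+1})$, a subspace of dimension exactly $\varrho$. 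Since $\Pi\subseteq\Sigma$ forces $\Pi^{*}\subseteq\Sigma^{*}$, we obtain $P\in\Pi^{*}\subseteq\Sigma^{*}=\angbra{Q_0,\dots,Q_\varrho}$, exhibiting $\varrho+1$ points of $B$ whose span contains $P$. As $P$ was arbitrary, $B$ saturates $\PG(N,q^{\varrho+1})$ with spans of $\varrho+1$ points.

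I expect the only genuinely delicate point to be the interaction between the two fields, namely that $\F_q$-independence is preserved under extension to $\F_{q^{\varrho+1}}$ (equivalently, that $\dim\Pi^{*}=\dim\Pi$ and that a subgeometry basis of $\Sigma$ remains a basis of $\Sigma^{*}$); this is what makes the decomposition $\mathbf p=\sum\omega_j\mathbf v_j$ geometrically meaningful and lets the strong blocking condition, which is imposed over $\F_q$, control the incidence over $\F_{q^{\varrho+1}}$. Everything else — the injectivity of the subgeometry embedding, the existence of $\Sigma$, and extracting a basis from a spanning set — is routine. The value $\varrho$ being the \emph{smallest} with the saturating property is not needed for this covering argument and can be recorded separately if required.
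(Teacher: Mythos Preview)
Your argument is correct and is essentially the standard proof of this result. Note, however, that the present paper does not supply its own proof of this theorem: it is merely quoted from \cite{Davydov09} (Theorem~3.2 there), so there is no in-paper proof to compare against. Your approach --- decomposing an arbitrary point of $\PG(N,q^{\varrho+1})$ along an $\F_q$-basis of $\F_{q^{\varrho+1}}$ into at most $\varrho+1$ vectors over $\F_q$, passing to a $\varrho$-dimensional subspace $\Sigma$ of the subgeometry containing their span, invoking the strong blocking property to find $\varrho+1$ points of $B$ spanning $\Sigma$, and then using preservation of linear independence under field extension --- is exactly the argument given in \cite{Davydov09}.

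One small remark: as you note yourself, your proof establishes the covering part of the definition (every point lies in a $\varrho$-space spanned by points of $B$) but not the minimality of $\varrho$. Strictly speaking, the definition of $\varrho$-saturating in this paper requires $\varrho$ to be the \emph{smallest} such value, so the literal statement would need a further (easy) argument or a counting lower bound. In practice this does not matter for the applications in Section~\ref{sec:covsatu}, since the goal is to bound $s_{q^{\varrho+1}}(N,\varrho)$ from above, and for that the covering property suffices.
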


	Let $s_q(N, \varrho)$ denote the smallest size of a $\varrho$--saturating set of $\PG(N, q)$.
	For recent upper bounds on $\varrho$--saturating sets of $\PG(N, q)$ the reader is referred to \cite{DMP, Denaux}. 
	
	\begin{theorem}[Denaux \cite{Denaux}, Theorem 6.2.12.]\label{Den} Suppose that $q$ is a prime power. Then
		$$\frac{\varrho+1}{e}q^{N-\varrho}<   s_{q^{\varrho+1}}(N, \varrho)\leq \frac{(\varrho+1)(\varrho+2)}{2} \left( {q^{N-\varrho}} +\frac{2\varrho}{\varrho+2}\frac{q^{N-\varrho}-1}{q-1} \right).$$
	\end{theorem}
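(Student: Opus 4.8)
The plan is to treat the two inequalities separately: the lower bound is a self-contained double-counting estimate valid for \emph{any} $\varrho$-saturating set, whereas for the upper bound I would produce an explicit set of the prescribed size in the small field and transport it into $\PG(N,q^{\varrho+1})$ through Theorem~\ref{kapcs}.

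For the lower bound, set $Q=q^{\varrho+1}$ and let $\cS\subset\PG(N,Q)$ be a $\varrho$-saturating set of size $s$. By definition every point of the space lies on a subspace generated by some $(\varrho+1)$-subset of $\cS$. There are at most $\binom{s}{\varrho+1}$ such subsets, and each generates a subspace of dimension at most $\varrho$, hence containing at most $\frac{Q^{\varrho+1}-1}{Q-1}$ points. Comparing with the total number $\frac{Q^{N+1}-1}{Q-1}$ of points of $\PG(N,Q)$ gives
\[
\binom{s}{\varrho+1}\ \geq\ \frac{Q^{N+1}-1}{Q^{\varrho+1}-1}\ >\ Q^{N-\varrho}\ =\ q^{(\varrho+1)(N-\varrho)},
\]
where the strict inequality is a one-line cross-multiplication using $N>\varrho$. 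Then I would use $\binom{s}{\varrho+1}\leq s^{\varrho+1}/(\varrho+1)!$ together with the elementary bound $(\varrho+1)!>\bigl(\tfrac{\varrho+1}{e}\bigr)^{\varrho+1}$ to obtain $s^{\varrho+1}>\bigl(\tfrac{\varrho+1}{e}\,q^{N-\varrho}\bigr)^{\varrho+1}$, and extracting $(\varrho+1)$-th roots yields precisely $s>\tfrac{\varrho+1}{e}q^{N-\varrho}$.

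For the upper bound, Theorem~\ref{kapcs} reduces the task to exhibiting a $(\varrho+1)$-fold strong blocking set of the subgeometry $\PG(N,q)\subset\PG(N,q^{\varrho+1})$ of cardinality at most the right-hand side; such a set is automatically $\varrho$-saturating in $\PG(N,q^{\varrho+1})$. Recall that a $(\varrho+1)$-fold strong blocking set is exactly a point set meeting every $\varrho$-dimensional subspace $\Lambda$ in a set of points spanning $\Lambda$. I would build one following the general construction of Davydov, Giulietti, Marcugini and Pambianco \cite{Davydov09}, which generalises the tetrahedron settling the extremal case $\varrho=N-1$ (there the $\binom{N+1}{2}$ lines joining $N+1$ points in general position meet each hyperplane in a spanning set, as one sees by projecting the vertices off the hyperplane onto it). Rewriting the target size as $\binom{\varrho+2}{2}q^{N-\varrho}+\varrho(\varrho+1)\frac{q^{N-\varrho}-1}{q-1}$ makes plausible that the configuration consists of a bulk of $\binom{\varrho+2}{2}$ ``affine'' pieces of $q^{N-\varrho}$ points each, together with a lower-dimensional remainder.

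The lower bound is routine; the entire difficulty sits in the upper bound. The hard part will be to verify, for the chosen configuration, both the exact cardinality and --- more delicately --- the strong blocking property: one must show that \emph{every} $\varrho$-subspace $\Lambda$, including those lying partly inside the constituent pieces, receives $\varrho+1$ independent points of the set, for which I would mimic the projection argument of the tetrahedron case. Once this is in place, Theorem~\ref{kapcs} immediately promotes the set to a $\varrho$-saturating set of $\PG(N,q^{\varrho+1})$ and the upper bound follows.
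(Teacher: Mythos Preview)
The paper does not prove Theorem~\ref{Den}; it is quoted verbatim from Denaux~\cite{Denaux} as a benchmark against which the paper's own Corollary~\ref{vegsokor} is compared. There is therefore no proof in the paper to match your proposal against.

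Assessing your proposal on its own merits: the lower bound is correct and complete. The double count of $(\varrho+1)$-subsets of a $\varrho$-saturating set, followed by $\binom{s}{\varrho+1}\le s^{\varrho+1}/(\varrho+1)!$ and the elementary $(\varrho+1)!>\bigl((\varrho+1)/e\bigr)^{\varrho+1}$, gives exactly $s>\frac{\varrho+1}{e}q^{N-\varrho}$. This is the standard argument and there is nothing to add.

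The upper bound, by contrast, is only a plan. You correctly reduce via Theorem~\ref{kapcs} to producing a $(\varrho+1)$-fold strong blocking set in $\PG(N,q)$ of the target size, and your rewriting $\binom{\varrho+2}{2}q^{N-\varrho}+\varrho(\varrho+1)\frac{q^{N-\varrho}-1}{q-1}$ is suggestive; but you neither specify the construction nor verify its size or the strong blocking property. Pointing to the tetrahedron and to~\cite{Davydov09} does not close the gap: the tetrahedron is the extremal case $\varrho=N-1$ and is strictly larger than Denaux's bound for smaller $\varrho$, and the general constructions in~\cite{Davydov09} do not produce this exact count either. Reaching the constant $\frac{(\varrho+1)(\varrho+2)}{2}$ genuinely requires Denaux's own construction (or an equivalent), which you have not supplied. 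As it stands, the upper half of the theorem remains unproved in your write-up.
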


	The most well-studied case is $\varrho=1$ where Theorem \ref{kapcs} provides an upper bound on saturating sets via the size of $2$-fold strong blocking sets, while the other side of the spectrum, namely the case of $N$-fold strong blocking sets (that we called cutting blocking sets for brevity) is also significant. Our probabilistic upper bound in Theorem \ref{main1}  thus in turn gives the following corollary.
	
	\begin{cor} 
		If $q>2$, then
		$s_{q^{N}}(N, N-1)\leq \left\lceil \frac{2N}{1+\frac{1}{(q+1)^2\ln(q)}}\right\rceil(q+1)$.
	\end{cor}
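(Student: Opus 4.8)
The plan is to combine the probabilistic construction of Theorem \ref{main1} with the subgeometry transfer of Theorem \ref{kapcs}. First I would recall, as noted immediately after Definition \ref{strong_def}, that a cutting blocking set of $\PG(N,q)$ is precisely an $N$-fold strong blocking set (equivalently, a cutting $1$-blocking set coincides with an $N$-fold strong blocking set). Hence the object produced by Theorem \ref{main1} is an $N$-fold strong blocking set in $\PG(N,q)$.

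Next I would apply Theorem \ref{kapcs} with the choice $\varrho = N-1$, so that $\varrho+1 = N$ and $q^{\varrho+1} = q^{N}$. Viewing $\PG(N,q)$ as a subgeometry $\PG(N,q) \subset \PG(N,q^{N})$, the theorem states that any $(\varrho+1)$-fold strong blocking set, i.e.\ any $N$-fold strong blocking set of the subgeometry, is an $(N-1)$-saturating set of $\PG(N,q^{N})$.

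Finally, I would simply read off the size. For $q>2$, Theorem \ref{main1} guarantees a cutting blocking set consisting of at most $m$ lines with $m = \left\lceil \frac{2N}{1+\frac{1}{(q+1)^2\ln q}}\right\rceil$, so its point set has size at most $m(q+1)$. By the two previous steps this very point set, regarded inside $\PG(N,q^{N})$, is an $(N-1)$-saturating set, and therefore $s_{q^{N}}(N,N-1) \leq m(q+1) = \left\lceil \frac{2N}{1+\frac{1}{(q+1)^2\ln q}}\right\rceil(q+1)$, as claimed.

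Since the argument is a direct chaining of two already-established results, I do not anticipate a genuine obstacle. The only point requiring care is the bookkeeping of parameters, namely checking that the single choice $\varrho+1 = N$ simultaneously matches the fold $N$ of the strong blocking set furnished by Theorem \ref{main1} and the exponent of the field extension $q^{N}$ appearing in the saturating-set statement.
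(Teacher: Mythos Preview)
Your proof is correct and matches the paper's approach exactly: the corollary is obtained by combining Theorem~\ref{main1} (which produces a cutting blocking set, i.e.\ an $N$-fold strong blocking set, of the stated size in $\PG(N,q)$) with Theorem~\ref{kapcs} applied with $\varrho=N-1$. The paper itself does not spell out the details beyond noting that the corollary follows from Theorem~\ref{main1} via Theorem~\ref{kapcs}, so your explicit parameter check is, if anything, more thorough.
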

	
	Note that the previously known general result in this direction was the tetrahedron construction, see \cite{alfa+neri} which provides a point set of size $\binom{N+1}{2}(q-1)+N+1$ for a cutting blocking set, and also for a saturating set in  $\PG(N, q^2)$, and the very recent slight improvement of Denaux \cite[Theorem 6.2.11.]{Denaux}  together with  the previously mentioned \cite{Davydov09}  gives $$s_{q^{N}}(N, N-1)\leq  \left(\frac{N(N+1)}{2}-2\right)q-\binom{N}{2}+ \min\{7, 2q\}.$$
	
	It is easy to see that the larger $\varrho$ is, the larger is the gap between the lower and upper bounds of Theorem \ref{Den}. Our main result in this section is Corollary \ref{vegsokor} in which we get an upper bound close to the lower bound even if $\varrho$ is large.
	
	Following the proof of Theorem \ref{main1} on an upper bound of cutting blocking sets, one can get a general result for $t$-fold strong blocking sets as well.  The idea is analogous to that in Theorem \ref{main1}: we construct a $t$-fold strong blocking set in $\PG(N,q)$ as the union of a small number of randomly chosen $(N-t+1)$-dimensional subspaces. Note that a set of subspaces of $\PG(N,q)$ of dimension $N-t+1$ whose union is a $t$-fold strong blocking set is also called a set of \emph{higgledy-piggledy $(N-t+1)$-spaces} \cite{FancsaliSziklai2}. Similarly as in \cite{FancsaliSziklai1} for the case of higgledy-piggledy line sets (that is, $t=N$), Fancsali and Sziklai construct a set of higgledy-piggledy $(N-t+1)$-spaces in $\PG(N,q)$ of size $(N-t+2)(t-1)+1$, whenever $q>N+1$ \cite[Subsection~3.4]{FancsaliSziklai2}. Our random construction reaches this size only asymptotically in $q$, but it does not require $q$ to be large.

	\begin{theorem}
		There is a strong $t$-fold blocking set $\B$ in $\PG(N,q)$ consisting of the points of $m$ subspaces of dimension $N-t+1$ for 
		\[m=\lceil 
		(N-t+2)(t-1)c_1(q)+c_2(q)\rceil,\] 
		where the constants $c_1(q)$ and $c_2(q)$ are defined as
		\begin{equation} c_1(q)= \left\{
			\begin{matrix}
				\frac{-\ln q}{\ln(1 - {e^{-\frac{1}{q-2}}})} & \normalsize\mbox{for } q>2, \\
				& \\
				\frac{-\ln 2}{\ln(1 - 0.5{e^{-\frac{2}{3}}})} & \mbox{for } q=2.
			\end{matrix}
			\right. 
			\mbox{ \ \ and \ \ }
			c_2(q)= \left\{
			\begin{matrix}
				\frac{-1}{(q-2)\ln(1 - {e^{-\frac{1}{q-2}}})} & \normalsize\mbox{for } q>2, \\
				& \\
				\frac{-\ln(2e^{2/3})}{\ln(1 - 0.5{e^{-\frac{2}{3}}})} & \mbox{for } q=2.
			\end{matrix}
			\right. 
		\end{equation}
	\end{theorem}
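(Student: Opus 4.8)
The plan is to mimic the random construction behind Theorem \ref{main1}, replacing the random lines by random subspaces of dimension $N-t+1$. Concretely, I would pick subspaces $\Pi_1,\dots,\Pi_m$ of dimension $N-t+1$ in $\PG(N,q)$ independently and uniformly at random, set $\B=\bigcup_{i=1}^m\Pi_i$, and bound from below the probability that $\B$ is a $t$-fold strong blocking set. By Definition \ref{strong_def}, $\B$ fails to be one precisely when some $(t-1)$-dimensional subspace $\Lambda$ satisfies $\langle\Lambda\cap\B\rangle\subsetneq\Lambda$; equivalently, $\Lambda\cap\B$ lies in a hyperplane $M$ of $\Lambda$ with $\dim M=t-2$.

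The key structural step, in analogy with the reduction used in Theorem \ref{main1}, is that such a failure forces \emph{every} $\Pi_i$ to meet one common $(t-2)$-dimensional subspace. Indeed, since $\dim\Pi_i+\dim\Lambda=(N-t+1)+(t-1)=N$, the dimension formula gives $\Pi_i\cap\Lambda\neq\emptyset$ for all $i$; as $\Pi_i\subseteq\B$, we obtain $\emptyset\neq\Pi_i\cap\Lambda\subseteq\Lambda\cap\B\subseteq M$, whence $\Pi_i\cap M\neq\emptyset$. A union bound over the $\qbinom{N+1}{t-1}$ subspaces $M$ of dimension $t-2$ then yields
\[
\PP(\B\text{ is not a }t\text{-fold strong blocking set})\le \qbinom{N+1}{t-1}\cdot \PP(\Pi\cap M\neq\emptyset)^{m},
\]
with $M$ a fixed $(t-2)$-space and $\Pi$ a uniformly random $(N-t+1)$-space. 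This is the exact analogue of Inequality \eqref{eq:prob_kicsi}; crucially, because we only target a bound that is optimal asymptotically in $q$, we may drop the delicate second summand that was needed in Theorem \ref{main1}.

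Next I would evaluate the single-subspace probability exactly. In vector-space terms $\Pi$ and $M$ correspond to subspaces of $\F_q^{N+1}$ of dimensions $N-t+2$ and $t-1$, whose dimensions sum to $N+1$; thus $\Pi\cap M=\emptyset$ projectively iff these two subspaces are complementary. Counting the $q^{(t-1)(N-t+2)}$ complements of a fixed $(t-1)$-dimensional subspace and dividing by $\qbinom{N+1}{t-1}$ gives
\[
\PP(\Pi\cap M=\emptyset)=\frac{q^{(t-1)(N-t+2)}}{\qbinom{N+1}{t-1}}>e^{-1/(q-2)}\qquad(q>2),
\]
where the inequality is simply Lemma \ref{GBinom-becsles} in the form $\qbinom{N+1}{t-1}<q^{(N-t+2)(t-1)}e^{1/(q-2)}$; for $q=2$ the corresponding clause of the same lemma yields the lower bound $\tfrac12 e^{-2/3}$.

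Finally I would substitute $\PP(\Pi\cap M\neq\emptyset)<1-e^{-1/(q-2)}$ together with the Gaussian-binomial bound into the union bound and require the right-hand side to be below $1$. Taking logarithms, this holds as soon as
\[
m>\frac{(N-t+2)(t-1)\ln q+\tfrac{1}{q-2}}{-\ln\!\left(1-e^{-1/(q-2)}\right)}=(N-t+2)(t-1)\,c_1(q)+c_2(q),
\]
which is exactly the asserted value of $m$, and the $q=2$ case follows identically from the $q=2$ clauses of Lemma \ref{GBinom-becsles}, producing $c_1(2)$ and $c_2(2)$. A positive success probability then delivers the desired $m$ subspaces. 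The genuinely delicate point is the structural reduction of the second paragraph: one must verify that a failure really does pin all $m$ subspaces to a single $(t-2)$-dimensional subspace, so that the event decouples into independent single-subspace intersections. Once this is secured the whole estimate collapses to a single inversion of Lemma \ref{GBinom-becsles}, which is precisely why $c_1(q),c_2(q)$ emerge so cleanly and tend to the Fancsali--Sziklai value $(N-t+2)(t-1)$ as $q\to\infty$.
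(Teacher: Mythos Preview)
Your proposal is correct and follows essentially the same approach as the paper: a union bound over $(t-2)$-dimensional subspaces, the count of complements to evaluate $\PP(\Pi\cap M=\emptyset)=q^{(t-1)(N-t+2)}/\qbinom{N+1}{t-1}$, and Lemma~\ref{GBinom-becsles} to bound both the Gaussian binomial and this probability. Your justification of the structural reduction (via the dimension formula forcing $\Pi_i\cap\Lambda\neq\emptyset$ and hence $\Pi_i\cap M\neq\emptyset$) is in fact slightly more explicit than the paper's one-line remark.
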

	
	In other words, the theorem above ensures the existence of a set of higgledy-piggledy $(N-t+1)$-spaces of size $m$. Note that $c_1(q)\rightarrow 1$ as $q$ tends to infinity and $c_1(2)\approx 2.34$, whereas $c_2(q)\rightarrow 0$ as $q$ tends to infinity and $c_2(2)\approx 4.58$, $c_2(3)\approx 2.18$, and $c_2(q)<1$ for $q\geq 4$.
	
	\begin{proof}
		The proof is again an application of the first moment method. Let us choose a (multi)set of $m$ subspaces $\{H_1,\ldots, H_m\}$ of dimension $N-t+1$ in $\PG(N,q)$ uniform randomly, and let $\B=\cup_{i=1}^m H_i$. First consider the following simple observation.
		\begin{equation}\label{eq:probs2}
			\PP(\B \mbox{ is a $t$-fold strong blocking set}) \geq 
			1-\PP(\,\exists\,\Lambda \colon \dim \Lambda={t-2}, \ \forall i\:H_i\cap \Lambda\neq \emptyset )
		\end{equation} 
		
		Indeed, if there does not exist such a subspace, then the intersection with every $t-1$ dimensional subspace  $\Lambda$ must be a point set that cannot be covered by a single $t-2$ dimensional subspace, hence the intersection spans $\Lambda$ itself. 
		Here we may apply a rough estimate on the probability 
		\[
		\PP(\,\exists\,\Lambda \colon \dim \Lambda={t-2}, \ \forall i\:H_i\cap \Lambda\neq \emptyset)
		\]
		by applying the following lemma.
		
		\begin{lemma}\label{nemures2}
			Let $H$ be a subspace of dimension $N-t+1$ chosen uniform randomly, and let $\Lambda$ be a fixed $(t-2)$-dimensional subspace.
			Then 
			\begin{equation}
				\begin{split}
					\PP(H \cap \Lambda\neq \emptyset)
					< \left\{
					\begin{matrix}
						1 - \frac{1}{e^{\frac{1}{q-2}}} & \normalsize\mbox{for } q>2, \\
						& \\
						1 - \frac{1}{2e^{\frac{2}{3}}} & \mbox{for } q=2.
					\end{matrix}
					\right.
				\end{split}
			\end{equation} 
		\end{lemma}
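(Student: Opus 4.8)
The plan is to bound $\PP(H\cap\Lambda\neq\emptyset)$ directly from the count of $(N-t+1)$-dimensional subspaces $H$ that meet a fixed $(t-2)$-dimensional subspace $\Lambda$. First I would compute the complementary probability $\PP(H\cap\Lambda=\emptyset)$, since a subspace disjoint from $\Lambda$ has a cleaner combinatorial description. The total number of $(N-t+1)$-dimensional subspaces is $\qbinom{N+1}{N-t+2}$. For the numerator I would count $(N-t+1)$-dimensional subspaces skew to $\Lambda$: since $\dim\Lambda=t-2$, we have $\dim\Lambda+\dim H=(t-2)+(N-t+1)=N-1<N$, so a generic $H$ is disjoint from $\Lambda$ and the dimensions are complementary to the extent that disjointness is the expected behaviour. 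A standard way to count subspaces skew to a fixed one is to count ordered bases: pick $N-t+2$ points (in the vector-space sense, i.e.\ vectors spanning a subspace of the right dimension) one by one, each avoiding the span of $\Lambda$ together with the previously chosen vectors. This yields a ratio of the shape $\prod$ of terms $(q^{N+1}-q^{\dim\Lambda+1+i})/(q^{N+1}-q^{i})$ or an equivalent Gaussian-binomial expression.

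Concretely, I expect the clean identity
\[
\PP(H\cap\Lambda=\emptyset)=\frac{q^{(t-1)(N-t+2)}\qbinom{N-t+2}{N-t+2}_{\!}}{\qbinom{N+1}{N-t+2}}\quad\text{(up to the correct normalisation)},
\]
but rather than fix the exact closed form I would derive it as a telescoping product and then simplify. The key simplification is that $\PP(H\cap\Lambda=\emptyset)$ can be written as a product $\prod_{i}(1-q^{-(a_i)})$ over a suitable range, or bounded below by such a product, so that $\PP(H\cap\Lambda\neq\emptyset)=1-\PP(H\cap\Lambda=\emptyset)$ is bounded above by $1-\prod(1-q^{-a_i})$. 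To reach the stated bound I would then lower-bound the product $\PP(H\cap\Lambda=\emptyset)$ by a single clean factor. This is exactly where Lemma \ref{GBinom-becsles} enters: the Gaussian-binomial estimates there, of the form $\qbinom{n}{k}_q<q^{(n-k)k}e^{1/(q-2)}$ for $q>2$ (and the analogous factor $2^{(n-k)k+1}e^{2/3}$ for $q=2$), give precisely the constants $e^{1/(q-2)}$ and $2e^{2/3}$ appearing in the target inequality.

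The main steps in order are thus: (1) express $\PP(H\cap\Lambda=\emptyset)$ as a ratio of Gaussian binomials (or equivalently a product over the basis-extension process); (2) rewrite this ratio so that the leading power of $q$ cancels, leaving a quantity that tends to $1$ as $q\to\infty$; (3) apply the lower bound $\PP(H\cap\Lambda=\emptyset)\geq 1/\gamma(q)$ with $\gamma(q)=e^{1/(q-2)}$ for $q>2$ and $\gamma(q)=2e^{2/3}$ for $q=2$, using Lemma \ref{GBinom-becsles}; and (4) conclude $\PP(H\cap\Lambda\neq\emptyset)<1-1/\gamma(q)$, which is the claimed statement. Note the bound must be \emph{strict}, so I would make sure the inequality in Lemma \ref{GBinom-becsles} is strict and that no equality is introduced in the simplification.

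The hard part will be step (2)--(3): getting the ratio of Gaussian binomials into a form to which Lemma \ref{GBinom-becsles} applies cleanly, so that the extraneous powers of $q$ cancel and only the constant factor $\gamma(q)$ survives as a bound. The danger is that a naive application of the upper bound for $\qbinom{N+1}{N-t+2}_q$ in the denominator goes the wrong direction (an upper bound on the denominator gives a \emph{lower} bound on the fraction, which is what we want here, but one must also control the numerator with a matching \emph{lower} bound to avoid loss). Keeping the two directions consistent, so that the surviving constant is exactly $\gamma(q)$ and not something weaker, is the delicate bookkeeping I would watch most carefully; the $q=2$ case, with its extra factor of $2$ from the $\qbinom{n}{k}_2<2^{(n-k)k+1}e^{2/3}$ estimate, needs separate handling, which is why the statement splits into two cases.
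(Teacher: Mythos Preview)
Your approach is essentially the same as the paper's: compute the number of $(N-t+1)$-dimensional projective subspaces disjoint from $\Lambda$ via the basis-counting argument, obtain the exact value $q^{(t-1)(N-t+2)}$ for the numerator, and then apply Lemma~\ref{GBinom-becsles} to the denominator $\qbinom{N+1}{N-t+2}$. Your worry about step (2)--(3) is unfounded, since the numerator count is an \emph{exact} identity (the basis-counting ratio telescopes to precisely $q^{(t-1)(N-t+2)}$, no bound needed), so only the single upper bound on the Gaussian binomial in the denominator is required and the powers of $q$ cancel exactly.
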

		
		\begin{proof}
			Let us consider $\PG(N,q)$ as an $(N+1)$-dimensional vector space over $\GF(q)$, and let us count the number $A$ of $(N-t+2)$-dimensional subspaces that are disjoint from the given $(t-1)$-dimensional subspace $\Lambda$. We do this via counting the suitable bases for the $(N-t+2)$-dimensional subspace:
			\[
			A=\frac{\prod\limits_{i=0}^{N-t+1}(q^{N+1}-q^{t-1+i})}{\prod\limits_{i=0}^{N-t+1}(q^{N-t+2}-q^{i})}=\frac{\prod\limits_{i=0}^{N-t+1}q^{t-1+i}}{\prod\limits_{i=0}^{N-t+1}q^{i}}=q^{(t-1)(N-t+2)}.
			\]
			Hence  by taking into consideration the upper bound of Lemma 2.2 on Gaussian  binomials, the probability that an $(N-t+2)$-dimensional subspace intersects $\Lambda$ non-trivially is
			\[   1 - \frac{q^{(t-1)(N-t+2)}}{\qbinom{N+1}{N-t+2}} <
			\left\{
			\begin{matrix}
				1 - \frac{1}{e^{\frac{1}{q-2}}} & \normalsize\mbox{for } q>2, \\
				& \\
				1 - \frac{1}{2e^{\frac{2}{3}}} & \mbox{for } q=2.
			\end{matrix}
			\right.\]
		\end{proof}

		Clearly,
		\[
		\PP(\,\exists\,\Lambda \colon \dim \Lambda={t-2}, \ \forall i\:H_i\cap \Lambda\neq \emptyset) \leq 
		\begin{bmatrix}N+1\\ t-1\end{bmatrix}_q \left(\PP(H_1\cap \Lambda\neq \emptyset)\right)^m.
		\]
		From Lemma \ref{GBinom-becsles} on the Gaussian binomials and Lemma \ref{nemures2} we obtain
		%
		%
		%
		\[
		\PP(\,\exists\,\Lambda \colon \dim \Lambda={t-2}, \ \forall i\:H_i\cap \Lambda\neq \emptyset) \leq  
		q^{(N-t+2)(t-1)}\cdot e^{1/(q-2)}\left(1 - {e^{-\frac{1}{q-2}}}\right)^m
		\]
		for $q>2$, and 
		\[
		\PP(\,\exists\,\Lambda \colon \dim \Lambda={t-2}, \ \forall i\:H_i\cap \Lambda\neq \emptyset) \leq  
		q^{(N-t+2)(t-1)}\cdot 2e^{2/3}\left(1 - \frac{1}{2e^{\frac{2}{3}}}\right)^m
		\] 
		for $q=2$.
		
		It is easy to check that if we choose $m$ as claimed, 
		then the probability in view is strictly smaller than $1$, completing the proof.
	\end{proof}
	
	This is turn provides a bound on $\varrho$-saturating sets via Theorem \ref{kapcs}.
	
	\begin{cor}\label{vegsokor}  $$s_{q^{\varrho+1}}(N, \varrho)\leq\left\lceil c_1(q)(N-\varrho+1)\varrho+c_2(q)\right\rceil \frac{q^{N-\varrho+1}-1}{q-1}.$$
	\end{cor}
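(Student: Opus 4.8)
The plan is to deduce this corollary directly from the theorem immediately preceding it (the random construction of $t$-fold strong blocking sets as unions of subspaces) together with the correspondence of Davydov, Giulietti, Marcugini and Pambianco recorded in Theorem \ref{kapcs}; essentially no new work beyond a careful specialisation is needed.

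First I would specialise the preceding theorem to $t=\varrho+1$. This yields a $(\varrho+1)$-fold strong blocking set $\B$ in $\PG(N,q)$ that is the union of $m$ subspaces, each of dimension $N-t+1=N-\varrho$, where
\[
m=\bigl\lceil (N-t+2)(t-1)\,c_1(q)+c_2(q)\bigr\rceil=\bigl\lceil (N-\varrho+1)\varrho\,c_1(q)+c_2(q)\bigr\rceil ,
\]
using the identities $N-t+2=N-\varrho+1$ and $t-1=\varrho$. This already matches the first factor appearing in the claimed bound.

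Next I would pass to the extension field. Regarding $\PG(N,q)$ as a subgeometry of $\PG(N,q^{\varrho+1})$, Theorem \ref{kapcs} guarantees that the $(\varrho+1)$-fold strong blocking set $\B$ is a $\varrho$-saturating set of $\PG(N,q^{\varrho+1})$. Consequently $s_{q^{\varrho+1}}(N,\varrho)\leq |\B|$, so it remains only to estimate $|\B|$.

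Finally I would bound the cardinality of the union. Each of the $m$ chosen subspaces has dimension $N-\varrho$ and therefore contains exactly $\theta_{N-\varrho}=\frac{q^{N-\varrho+1}-1}{q-1}$ points, whence $|\B|\leq m\cdot\frac{q^{N-\varrho+1}-1}{q-1}$ (the union can only be smaller than this sum if the subspaces overlap, which is harmless for an upper bound). Substituting the value of $m$ gives precisely the asserted inequality. The only place requiring care is the index bookkeeping in the substitution $t=\varrho+1$, so that the subspace dimension comes out as $N-\varrho$ and the per-subspace point count is $\theta_{N-\varrho}$; there is no genuine analytic obstacle, since both the existence and the size estimate are already supplied by the preceding theorem.
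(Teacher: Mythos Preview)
Your proposal is correct and matches the paper's approach exactly: the corollary is stated as an immediate consequence of the preceding theorem via Theorem~\ref{kapcs}, and your specialisation $t=\varrho+1$ with the obvious cardinality bound $|\B|\leq m\,\theta_{N-\varrho}$ is precisely what is intended.
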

	
	Note that this improves the bound of Denaux \cite{Denaux} if $q$ and $\varrho$ is large enough; more precisely for every $\varrho>\frac{2}{3}N$ if $q$ is large enough.
	
	Let us also note that the aforementioned construction of Fancsali and Sziklai \cite{FancsaliSziklai2} yields
	\[
	s_{q^{\varrho+1}}(N, \varrho)\leq( (N-\varrho+1)\varrho+1) \frac{q^{N-\varrho+1}-1}{q-1}
	\]
	whenever $q>N+1$.
	Finally, let us mention that higgledy-piggledy lines and subspaces are also related to \emph{uniform subspace designs}. For the definition and coding theoretic applications of subspace designs, we refer to the works of Guruswami and Kopparty \cite{GK}, Guruswami, Resch, and Xing \cite{GRX} and the references therein, whereas the relation of uniform subspace designs and higgledy-piggledy subspaces can be found in the work of Fancsali and Sziklai \cite{FancsaliSziklai2}.
	
	{\bf Acknowledgement.} The authors are grateful to Lins Denaux for several remarks on the first version of the manuscript.

	\section{Appendix}
	
	{\bf Lemma 2.2.}
	
	$ \begin{bmatrix}n\\ k\end{bmatrix}_q< q^{(n-k)k}\cdot e^{1/(q-2)}$ for $q>2$  and $k>0$, and 
	
	$ \begin{bmatrix}n\\ k\end{bmatrix}_2<2^{(n-k)k+1}\cdot e^{2/3}$ for $q=2$ and $k>0$.
	
	$ \begin{bmatrix}n\\ n-2\end{bmatrix}_q < q^{2(n-2)}\cdot \frac{q}{q-1}\frac{q^2}{q^2-1}$ for $q\geq2$.

	\begin{proof} 
		\begin{equation}\label{l22eq}
			\qbinom{n}{k}=\frac{(q^n-1)(q^{n-1}-1)\cdots(q^{n-k+1}-1)} {(q^k-1)\cdots(q^2-1)(q-1)}< q^{(n-k)k}\cdot \prod_{t=1}^{k} \frac{q^t}{q^t-1}.
		\end{equation}
		This  gives the third statement as $\qbinom{n}{n-2}=\qbinom{n}{2}$. Suppose now $q>2$. Recall that $\left(1+\frac1i\right)^i$ is strictly increasing, whence $\left(1+\frac{1}{(q-1)^t}\right) \leq \left(1+\frac{1}{(q-1)^k}\right)^{(q-1)^{k-t}}$ follows for $t\leq k$. Thus we have
		\[ 
		\begin{bmatrix}n\\ k\end{bmatrix}_q< 
		q^{(n-k)k}\cdot \prod_{t=1}^{k} \frac{q^t}{q^t-1}
		\leq q^{(n-k)k}\prod_{t=1}^{k} \left(1+\frac{1}{(q-1)^t}\right)\leq
		q^{(n-k)k}\left(1+\frac{1}{(q-1)^k}\right)^{\sum_{t=1}^k(q-1)^{k-t}}
		\]
		\[= q^{(n-k)k}\left(1+\frac{1}{(q-1)^k}\right)^{\frac{(q-1)^{k}-1}{q-2}}<q^{(n-k)k}\cdot e^{1/(q-2)}.
		\]
		For $q=2$, \eqref{l22eq} gives
		\[
		\begin{bmatrix}n\\ k\end{bmatrix}_2 < 
		2^{(n-k)k}\cdot\prod_{i=1}^k \left(1+\frac{1}{2^t-1}\right)
		\leq 2^{(n-k)k}\cdot 2\cdot \prod_{t=0}^{k-2} \left(1+\frac{1}{3\cdot2^t}\right)<2^{(n-k)k+1}\cdot  \left(1+\frac{1}{3\cdot2^{k-2}}\right)^{2^{k-1}},\]
		which implies 
		$ \begin{bmatrix}n\\ k\end{bmatrix}_2<2^{(n-k)k+1}\cdot e^{2/3}$.
	\end{proof}

	Here comes the precise deduction of Inequality \eqref{nemures}. We formulate a lemma first.
	
	\begin{lemma}\mbox{}
		
		\begin{itemize}
			\item If $0\leq a<b$, then $\frac{\theta_a+\frac1q}{\theta_b} \leq q^{a-b} < \frac{\theta_a+1}{\theta_b}$.
			\item If $0\leq a\leq b-2$, then $\frac{\theta_a+\theta_b}{q} > 2\theta_a + 1$.
			\item $\qbinom{k+1}{2}=\frac{\theta_k\theta_{k-1}}{q+1}$.
		\end{itemize}
	\end{lemma}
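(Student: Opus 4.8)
The plan is to reduce all three assertions to the single closed form $\theta_n=\frac{q^{n+1}-1}{q-1}$, after which each item becomes an elementary manipulation. The three parts are logically independent, so I would dispatch them one at a time, and I expect no part to require more than direct computation.

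I would start with the third item, which is a pure identity. Substituting the definition of the Gaussian binomial gives $\qbinom{k+1}{2}=\frac{(q^{k+1}-1)(q^k-1)}{(q^2-1)(q-1)}$, while substituting $\theta_k=\frac{q^{k+1}-1}{q-1}$ and $\theta_{k-1}=\frac{q^k-1}{q-1}$ gives $\frac{\theta_k\theta_{k-1}}{q+1}=\frac{(q^{k+1}-1)(q^k-1)}{(q-1)^2(q+1)}$. The two expressions coincide because $(q-1)(q+1)=q^2-1$, so there is nothing to estimate here.

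For the first item I would compute the single quantity $q^{a-b}\theta_b-\theta_a$ and show it governs both inequalities. Using the closed form, $q^{a-b}\theta_b=\frac{q^{a+1}-q^{a-b}}{q-1}$, so that $q^{a-b}\theta_b-\theta_a=\frac{1-q^{a-b}}{q-1}$. The upper bound $q^{a-b}\theta_b<\theta_a+1$ then amounts to $\frac{1-q^{a-b}}{q-1}<1$, which is immediate since $0<1-q^{a-b}<1$ forces $\frac{1-q^{a-b}}{q-1}<\frac{1}{q-1}\le 1$. The lower bound $\theta_a+\tfrac1q\le q^{a-b}\theta_b$ reduces to $\frac{1-q^{a-b}}{q-1}\ge\tfrac1q$, equivalently $q^{a-b+1}\le 1$; this is the one place where I would invoke that $a,b$ are integers with $a<b$, so $a-b+1\le 0$ and the inequality holds (with equality exactly when $a=b-1$).

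For the second item I would clear the denominator and rewrite the claim as $\theta_b-(2q-1)\theta_a>q$. Since the left-hand side decreases as $a$ grows, it suffices to verify the extreme admissible case $a=b-2$. There the difference telescopes cleanly:
\[
\theta_b-(2q-1)\theta_{b-2}=\frac{q^{b+1}-2q^{b}+q^{b-1}+2q-2}{q-1}=\frac{q^{b-1}(q-1)^2+2(q-1)}{q-1}=q^{b-1}(q-1)+2,
\]
using $q^{b+1}-2q^{b}+q^{b-1}=q^{b-1}(q-1)^2$. As $b\ge 2$ under the hypothesis $0\le a\le b-2$, this is at least $q(q-1)+2>q$, completing the proof. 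The computations throughout are routine; the only genuine care required is in the first item, where the direction of each inequality and the integrality of $a$ and $b$ must be tracked precisely, and the telescoping simplification in the second item is the single spot where a short algebraic rewrite, rather than a crude estimate, does the real work.
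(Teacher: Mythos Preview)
Your proof is correct. The approach differs from the paper's in that you work entirely through the closed form $\theta_n=\frac{q^{n+1}-1}{q-1}$ and carry out direct algebraic manipulations, whereas the paper argues more combinatorially: for the first two items it uses the recursive identity $\theta_b=\theta_a q^{b-a}+\theta_{b-a-1}$ together with the crude bounds $q^{b-a-1}\le\theta_{b-a-1}<q^{b-a}$, and for the third item it gives a double-counting argument (points of $\PG(k,q)$ times lines through each, divided by points per line). Your computation of $q^{a-b}\theta_b-\theta_a=\frac{1-q^{a-b}}{q-1}$ is arguably cleaner and makes the equality case $a=b-1$ transparent; the paper's recursive argument, on the other hand, avoids rational expressions and tracks more easily which inequality is strict. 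For the second item, your reduction to the extremal case $a=b-2$ plus an exact telescoping is slightly sharper than the paper's estimate $\theta_b\ge q^2\theta_a+q+1$; both reach the conclusion with essentially one line of inequality checking. Overall the differences are stylistic rather than substantive.
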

	\begin{proof}
		The first assertion follows from $\theta_b=\theta_a(q^{b-a})+\theta_{b-a-1}$ and $q^{b-a-1}\leq \theta_{b-a-1} < q^{b-a}$.
		As for the second, $b\geq a+2$ yields $\theta_b\geq q^2\theta_a+q+1$, whence $\theta_a + \theta_b \geq (q^2+1)\theta_a + q + 1 > 2q\theta_a + q$, as asserted.
		$\qbinom{k+1}{2}$ is the number of lines in $\PG(k,q)$, which space has $\theta_k$ points, each incident with $\theta_{k-1}$ lines, all of which have $q+1$ points.
	\end{proof}
	
	\noindent\textit{Proof of Inequality \eqref{nemures}}.
	Recall that $d\leq N-2$ is the dimension of a fixed subspace $\Lambda$, and $\ell_1$ is a line chosen uniform randomly.
	\[
	\PP(\ell_1 \cap \Lambda \neq \emptyset)= \frac{ \qbinom{d+1}{2}   + \qbinom{d+1}{1}  \cdot \frac{1}{q} \left( \qbinom{N+1}{1} -\qbinom{d+1}{1} \right)}{\qbinom{N+1}{2}}=
	\frac{ \frac{\theta_d\theta_{d-1}}{q+1} +\frac{\theta_d}{q}\left(\theta_N-\theta_d\right) } {\frac{\theta_{N}\theta_{N-1}}{q+1}}=
	\]
	\[
	=\frac{\theta_d\theta_{d-1}  + \left(1+\frac{1}{q}\right)\theta_d(\theta_N-\theta_d)}{\theta_N\theta_{N-1}}=
	\frac{\theta_d\theta_{d-1}  + \left(\theta_d+\theta_{d-1}+\frac{1}{q}\right)(\theta_N-\theta_d)}{\theta_N\theta_{N-1}}=
	\]
	\[
	=\frac{\theta_d+\theta_{d-1}+\frac1q}{\theta_{N-1}} - \frac{\theta_d^2+\frac{\theta_d}{q}}{\theta_{N}\theta_{N-1}}
	=\frac{\theta_d+\frac1q + \theta_{d-1} +\frac1q}{\theta_{N-1}} - \frac{\theta_d^2+\frac{\theta_d}{q} +\frac{\theta_N}{q}}{\theta_{N}\theta_{N-1}}<
	\]
	\[
	<\frac{\theta_d+\frac1q + \theta_{d-1} +\frac1q}{\theta_{N-1}} - \frac{\theta_d^2+2\theta_d +1}{\theta_{N}\theta_{N-1}} = \frac{\theta_d+\frac1q}{\theta_{N-1}} + \frac{\theta_{d-1} +\frac1q}{\theta_{N-1}} - \frac{\theta_d+1}{\theta_N}\cdot\frac{\theta_d+1}{\theta_{N-1}} <
	\]
	\[
	< q^{d-N+1}+q^{d-N}-q^{2d-2N+1}.
	\]
	\hfill\qed

\end{document}